\newtheorem{thm}{Theorem}[section]
\newtheorem{lem}[thm]{Lemma}
\newtheorem{prop}[thm]{Proposition}
\newtheorem{rmk}[thm]{Remark}
\newtheorem{thm-con}[thm]{Theorem-Conjecture}
\numberwithin{equation}{section}
\theoremstyle{definition}
\newcommand{\f}{\Bbb F}
\begin{document}

\title[Reversed Dickson Polynomials of the $(k+1)$-th kind]{Reversed Dickson Polynomials of the $(k+1)$-th kind over finite fields}

\author[Neranga Fernando]{Neranga Fernando}
\address{Department of Mathematics,
Northeastern University, Boston, MA 02115}
\email{w.fernando@neu.edu}

\begin{abstract}
We discuss the properties and the permutation behaviour of the reversed Dickson polynomials of the $(k+1)$-th kind $D_{n,k}(1,x)$ over finite fields. The results in this paper unify and generalize several recently discovered results on reversed Dickson polynomials over finite fields. 
\end{abstract}

\keywords{Finite field, Permutation polynomial, Dickson polynomial, Reversed Dickson polynomial}

\subjclass[2010]{11T06, 11T55}

\maketitle

\section{Introduction}

Let $p$ be a prime and $q$ a power of $p$. Let $\Bbb F_q$ be the finite field with $q$ elements. A polynomial $f \in \Bbb F_q[{\tt x}]$ is called a \textit{permutation polynomial} (PP) of $\Bbb F_q$ if the associated mapping $x\mapsto f(x)$ from $\f_q$ to $\f_q$ is a permutation of $\Bbb F_q$. Permutation polynomials over finite fields have important applications in Coding Theory, Cryptography, Finite Geometry, Combinatorics and Computer Science, among other fields.

Recently, reversed Dickson polynomials over finite fields have been studied extensively by many for their general properties and permutation behaviour. The concept of the reversed Dickson polynomial $D_{n}(a,x)$ was first introduced by Hou, Mullen, Sellers and Yucas in \cite { Hou-Mullen-Sellers-Yucas-FFA-2009} by reversing the roles of the variable and the parameter in the Dickson polynomial $D_{n}(x,a)$. 

The $n$-th reversed Dickson polynomial of the first kind $D_n(a,x)$ is defined by

\begin{equation}\label{E1.2}
D_{n}(a,x) = \sum_{i=0}^{\lfloor\frac n2\rfloor}\frac{n}{n-i}\dbinom{n-i}{i}(-x)^{i}a^{n-2i},
\end{equation}

where $a\in \f_q$ is a parameter. 

In \cite { Hou-Mullen-Sellers-Yucas-FFA-2009}, it was shown that the  reversed Dickson polynomials of the first kind are closely related to Almost Perfect Nonlinear (APN) functions which have applications in cryptography. Hou and Ly found more properties of  the reversed Dickson polynomials of the first kind and  necessary conditions for them to be a permutation of $\f_q$; see \cite{Hou-Ly-FFA-2010}.

By reversing the roles of the variable and the parameter in the Dickson polynomial of the second kind $E_{n}(x,a)$, the $n$-th reversed Dickson polynomial of the second kind $E_n(a,x)$ can be defined by

\begin{equation}\label{E1.4}
E_{n}(a,x) = \sum_{i=0}^{\lfloor\frac n2\rfloor}\dbinom{n-i}{i}(-x)^{i}a^{n-2i},
\end{equation}

where $a\in \f_q$ is a parameter. 

In \cite{Hong-Qin-Zhao-FFA-2016-2}, Hong, Qin, and Zhao explored the reversed Dickson polynomials of the second kind and found many of their properties and necessary conditions for them to be a permutation of $\f_q$.

For $a\in \f_q$, the $n$-th reversed Dickson polynomial of the $(k+1)$-th kind $D_{n,k}(a,x)$ is defined by
\begin{equation}\label{E1.1}
D_{n,k}(a,x) = \sum_{i=0}^{\lfloor\frac n2\rfloor}\frac{n-ki}{n-i}\dbinom{n-i}{i}(-x)^{i}a^{n-2i},
\end{equation}

and $D_{0,k}(a,x)=2-k$ ; See \cite{Wang-Yucas-FFA-2012}.

Note that  $D_{n,0}(a,x)=D_{n}(a,x)$ and $D_{n,1}(a,x)=E_{n}(a,x)$. Also note that we only need to consider $0\leq k\leq p-1$. It follows from \eqref{E1.2} , \eqref{E1.4}, and \eqref{E1.1} that 

\begin{equation}\label{E1.3}
D_{n,k}(a,x)=kE_{n}(a,x)-(k-1)D_{n}(a,x).
\end{equation}

The author of the current paper surveyed the properties and the permutation behaviour of the reversed Dickson polynomials of the third kind over finite fields in \cite{Fernando-2016}. Most recently, Cheng, Hong, and Qin studied the reversed Dickson polynomials of the fourth kind over finite fields; see \cite{Hong-Qin-Zhao-FFA-2016-4}. 

Motivated by \cite{Hong-Qin-Zhao-FFA-2016-4}, \cite{Hong-Qin-Zhao-FFA-2016-2}, \cite{Fernando-2016}, \cite{Hou-Ly-FFA-2010}, and \cite{Hou-Mullen-Sellers-Yucas-FFA-2009}, we fix $k$ and study the general properties and permutation property of the reversed Dickson polynomials of the $(k+1)$-th kind over finite fields. The results obtained in this paper unify and generalize many existing results on reversed Dickson polynomials. 

The paper is organized as follows. In Section 2, we present the generating function and some other general properties of the reversed Dickson polynomials of the $(k+1)$-th kind. We also discuss the cases $a=0$, $n=p^l$, $n=p^l+1$, and $n=p^l+2$, where $l\geq 0$ is an integer. Some necessary and sufficient conditions for $D_{n,k}(1,x)$ to be a permutation of $\f_q$ are presented in Section 2 as well. 

In Section 3, we give an explicit expression for the $n$-th reversed Dickson polynomial of the $(k+1)$-th kind $D_{n,k}(1,x)$. 

In Section 4, we compute the sum $\sum_{a\in \f_q}D_{n,k}(1,a)$. 


\section{Reversed Dickson polynomials of the $(k+1)$-th kind}

\subsection{The Case $a=0$}

When $a=0$, the reversed Dickson polynomials of the first kind satisfy (See \cite{Hou-Mullen-Sellers-Yucas-FFA-2009})

\[
D_n (0,x)=
\begin{cases}
0&\text{if}\ n\,\, \text{is odd},\cr
2\,(-x)^l&\text{if}\ n = 2l,
\end{cases}
\]

and the reversed Dickson polynomials of the second kind satisfy (See \cite{Hong-Qin-Zhao-FFA-2016-2})

\[
E_n (0,x)=
\begin{cases}
0&\text{if}\ n\,\, \text{is odd},\cr
(-x)^l&\text{if}\ n = 2l.
\end{cases}
\]

\eqref{E1.3} implies 

\[
D_{n,k} (0,x)=
\begin{cases}
0&\text{if}\ n\,\, \text{is odd},\cr
(2-k)\,(-x)^l&\text{if}\ n = 2l.
\end{cases}
\]

\begin{thm}\label{T2.2}
When $a=0$, $D_{n,k}(a,x)$ is a PP of $\f_q$ if and only if $k\neq 2$ and $n=2l$ with $(l,q-1)=1$. 
\end{thm}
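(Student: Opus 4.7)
The plan is to reduce the theorem directly to the classical characterization of permutation monomials, using the explicit formula for $D_{n,k}(0,x)$ recalled immediately before the theorem:
\[
D_{n,k}(0,x) = \begin{cases} 0 & \text{if } n \text{ is odd}, \\ (2-k)(-x)^l & \text{if } n = 2l. \end{cases}
\]
If $n$ is odd, the polynomial vanishes identically, so its induced map on $\f_q$ is constantly zero and is certainly not a permutation. Thus a necessary condition is that $n$ be even.

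Writing $n = 2l$, the polynomial becomes $D_{n,k}(0,x) = (2-k)(-1)^l\,x^l$, a monomial $c\,x^l$ with coefficient $c = (2-k)(-1)^l \in \f_q$. It is classical that a monomial $c\,x^l$ permutes $\f_q$ if and only if $c \neq 0$ and $\gcd(l, q-1) = 1$. The coefficient $c$ is nonzero in $\f_q$ precisely when $k \not\equiv 2 \pmod{p}$, which under the standing assumption $0 \leq k \leq p-1$ is the same as $k \neq 2$. Combining, $D_{n,k}(0,x)$ is a PP of $\f_q$ if and only if $n$ is even, $k \neq 2$, and $\gcd(l, q-1) = 1$, as claimed.

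I expect no real obstacle: the explicit formula does all the work, and the remainder reduces to the standard fact about permutation monomials. A minor sanity check concerns the degenerate case $l = 0$, in which $x^l$ is constant and hence not a permutation for $q > 1$; but here $\gcd(0,q-1) = q-1$, which fails the condition $\gcd(l,q-1)=1$ whenever $q > 2$, so the stated equivalence already excludes this case in all interesting situations. The proof can therefore be written in essentially three lines: dispatch the odd-$n$ case, rewrite the even-$n$ case as a monomial, and apply the monomial permutation criterion.
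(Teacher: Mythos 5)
Your proof is correct and follows essentially the same route the paper intends: the paper derives the formula $D_{n,k}(0,x)=(2-k)(-x)^l$ for $n=2l$ (and $0$ for odd $n$) from \eqref{E1.3} and leaves the rest to the standard monomial criterion of Lemma~\ref{L2.1}, which is exactly your argument. Your extra remark on the degenerate case $l=0$ is a reasonable sanity check but not needed beyond what the paper does.
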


\begin{rmk}
Let $k=0$. Then Theorem~\ref{T2.2} explains the results in \cite[Section 2]{Hou-Mullen-Sellers-Yucas-FFA-2009}.
\end{rmk}

\begin{rmk}
Let $k=1$. Then Theorem~\ref{T2.2} explains the results in \cite[Section 2]{Hong-Qin-Zhao-FFA-2016-2}. 
\end{rmk}

\begin{rmk}
Let $k=2$. Then Theorem~\ref{T2.2} implies $D_{n,2}(0,x)$ is not a PP of $\f_q$ which is \cite[Lemma 2.1]{Fernando-2016}.
\end{rmk}

\begin{rmk}
Let $k=3$, $p>3$, and $l=\frac{n}{2}$. Then Theorem~\ref{T2.2} explains the results in \cite[Section 2]{Hong-Qin-Zhao-FFA-2016-4}. 
\end{rmk}

We thus hereafter assume that $a\in \f_{q}^{*}$. 

For $a\neq 0$, we write $x=y(a-y)$ with an indeterminate $y\in \f_{q^2}$ such that $y\neq \frac{a}{2}$.  Then 

\[
D_{n}(a,x)=y^n+(a-y)^n
\]

and 

\[
E_n(a,x)=\displaystyle\frac{y^{n+1}-(a-y)^{n+1}}{2y-a}
\]

are the functional expressions of the Dickson polynomial of the first kind and second kind, respectively. From \eqref{E1.3}, we have 

\[
\begin{split}
D_{n,k}(a,x)=k\,\Big[ \displaystyle\frac{y^{n+1}-(a-y)^{n+1}}{2y-a}\Big]-(k-1)\{y^n+(a-y)^n\},
\end{split}
\]

where $y\neq \frac{a}{2}$. Note that

\[
\begin{split}
D_{n,k}(a,x)&=k\,\Big[ \displaystyle\frac{y^{n+1}-(a-y)^{n+1}}{2y-a}\Big]-(k-1)\{y^n+(a-y)^n\}\cr
&=k\,\Big[ \displaystyle\frac{y^{n+1}-(a-y)^{n+1}}{2y-a}-y^n-(a-y)^n\Big]+y^n+(a-y)^n\cr
&=k\,\Big[ \displaystyle\frac{y^n(a-y)-y(a-y)^n}{2y-a}\Big]+y^n+(a-y)^n,
\end{split}
\]

i.e.

\begin{equation}\label{E2.1}
\begin{split}
D_{n,k}(a,x)&=k\,\Big[ \displaystyle\frac{y^n(a-y)-y(a-y)^n}{2y-a}\Big]+D_n(a,x).
\end{split}
\end{equation}

It follows from the definitions that in characteristic 2, 

\[
D_{n,k} (a,x)=
\begin{cases}
E_n(a,x) &\text{if}\ k\,\, \text{is odd},\cr
D_n(a,x) &\text{if}\ k\,\, \text{is even}.
\end{cases}
\]

We thus hereafter always assume, unless specified, in this paper that $p$ is odd.  

Let  $a\in \f_{q}^{*}$. Then it follows from the definition that
\begin{equation}\label{E2.2}
D_{n,k}(a,x)=a^n\,D_{n,k}(1,\frac{x}{a^2}). 
\end{equation}
Hence $D_{n,k}(a,x)$ is a PP on $\f_{q}$ if and only if $D_{n,k}(1,x)$ is a PP on $\f_{q}$.

From \eqref{E2.1}, we have 

\begin{equation}\label{E2.3}
\begin{split}
D_{n,k}(1,y(1-y))&=k\,\Big[ \displaystyle\frac{y^n(1-y)-y(1-y)^n}{2y-1}\Big]+D_n(1,y(1-y)),
\end{split}
\end{equation}

where $y\neq \frac{1}{2}$. 

Let $x=1$ and $a=2$ in \eqref{E2.2} to obtain 

\begin{equation}\label{E2.4}
D_{n,k}\Big(1,\frac{1}{4}\Big)\,=\,\frac{D_{n,k}(2,1)}{2^n}. 
\end{equation}

Since $E_n(2,1)=n+1$ and $D_n(2,1)=2$ (See \cite{Lidl-Mullen-Turnwald-1993}), we have from \eqref{E1.3}

\begin{equation}\label{E2.5}
D_{n,k}(2,1)=kE_{n}(2,1)-(k-1)D_{n}(2,1)=k(n-1)+2.
\end{equation}

Combining \eqref{E2.4} and \eqref{E2.5} we get for $y=\frac{1}{2}$

\begin{equation}\label{E2.6}
D_{n,k}\Big(1,\frac{1}{4}\Big)\,=\,\frac{k(n-1)+2}{2^n}. 
\end{equation}

It is clear from \eqref{E2.3} that if $n_1\equiv n_2 \pmod{q^2-1}$, then $D_{n_1,k}(1,x)=D_{n_2,k}(1,x)$ for any $x\in \f_{q}\setminus \{\frac{1}{4}\}$.

\begin{prop}\label{P2.2} Let $p$ be an odd prime and $n$ be a non-negative integer. Then 
$$D_{0,k}(1,x)=2-k,\,\,D_{1,k}(1,x)=1,\,\,\textnormal{and}$$
$$D_{n,k}(1,x)=D_{n-1,k}(1,x)-x\,D_{n-2,k}(1,x),\,\, \textnormal{for} \,\,n\geq 2.$$
\end{prop}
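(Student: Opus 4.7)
The plan is to derive the recurrence by exploiting the linearity in (1.3), namely $D_{n,k}(1,x) = k\,E_n(1,x) - (k-1)\,D_n(1,x)$, together with the classical three-term recurrences for the reversed Dickson polynomials of the first and second kinds.

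First I would verify the initial values. The convention $D_{0,k}(a,x) = 2-k$ laid down right after (1.1) gives $D_{0,k}(1,x)=2-k$ immediately. For $n=1$, the sum in (1.1) has only the index $i=0$, yielding $D_{1,k}(1,x) = \tfrac{1}{1}\binom{1}{0}(-x)^0\cdot 1^1 = 1$. (As a consistency check, (1.3) gives $D_{0,k}(1,x) = k\cdot E_0(1,x) - (k-1)\,D_0(1,x) = k - 2(k-1) = 2-k$ and $D_{1,k}(1,x) = k\cdot 1 - (k-1)\cdot 1 = 1$.)

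Next I would establish the recurrence $f_n(1,x) = f_{n-1}(1,x) - x\,f_{n-2}(1,x)$ for $f \in \{D, E\}$. Using the functional expressions recorded earlier in Section~2 (with $a=1$, so that $y$ and $1-y$ are the roots of $t^2 - t + x = 0$ and hence satisfy $y(1-y)=x$, $y+(1-y)=1$), the Newton-type identity
\[
y^n + (1-y)^n = \bigl[y+(1-y)\bigr]\bigl[y^{n-1}+(1-y)^{n-1}\bigr] - y(1-y)\bigl[y^{n-2}+(1-y)^{n-2}\bigr]
\]
immediately gives $D_n(1,x) = D_{n-1}(1,x) - x\,D_{n-2}(1,x)$, and the analogous manipulation on $\frac{y^{n+1}-(1-y)^{n+1}}{2y-1}$ gives $E_n(1,x) = E_{n-1}(1,x) - x\,E_{n-2}(1,x)$. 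These identities of polynomials, initially obtained on the Zariski-open set $y \ne \tfrac12$, hold as polynomial identities in $x$.

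Finally I would combine: since $D_{n,k}(1,x)$ is, by (1.3), a fixed $\f_p$-linear combination of $E_n(1,x)$ and $D_n(1,x)$, and both satisfy the same recurrence with the same coefficients, so does $D_{n,k}(1,x)$. The whole argument is essentially routine, so there is no real obstacle; the only small care needed is in justifying the recurrences for $D_n$ and $E_n$ themselves, which one might prefer to cite from \cite{Hou-Mullen-Sellers-Yucas-FFA-2009} and \cite{Hong-Qin-Zhao-FFA-2016-2} rather than reprove.
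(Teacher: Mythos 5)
Your proposal is correct and follows essentially the same route as the paper: both reduce the recurrence for $D_{n,k}(1,x)$ to the three-term recurrences satisfied by the first- and second-kind constituents, using the linear decomposition \eqref{E1.3} (equivalently \eqref{E2.3}, since $\frac{y^n(1-y)-y(1-y)^n}{2y-1}=E_n(1,x)-D_n(1,x)$) together with the functional expressions in $y$ with $x=y(1-y)$. The only cosmetic difference is that the paper verifies the exceptional point $x=\tfrac14$ (i.e.\ $y=\tfrac12$) by a separate direct computation with $D_{n,k}(1,\tfrac14)=\frac{k(n-1)+2}{2^n}$, whereas you dispose of it by observing that the recurrences for $D_n(1,x)$ and $E_n(1,x)$ are already polynomial identities over $\mathbb{Z}$ (or by citing them), which is a legitimate shortcut.
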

\begin{proof}
It follows from \eqref{E2.3} and \eqref{E2.6} that $D_{0,k}(1,x)=2-k$ and $D_{1,k}(1,x)=1$. 

Let $n\geq 2$.

When $x=\frac{1}{4}$, 
\[
\begin{split}
D_{n-1,k}\Big(1,\frac{1}{4}\Big)-\frac{1}{4}\,D_{n-2,k}\Big(1,\frac{1}{4}\Big)&=\frac{k(n-2)+2}{2^{n-1}}-\frac{1}{4}\frac{k(n-3)+2}{2^{n-2}}=\frac{k(n-1)+2}{2^{n}}\cr
&=D_{n,k}\Big(1,\frac{1}{4}\Big).
\end{split}
\]

When $x\neq \frac{1}{4}$, we write $x=y(1-y)$ with $y\neq \frac{1}{2}$. The rest of the proof follows from \eqref{E2.3} and noticing that 

\[
\begin{split}
\displaystyle\frac{y^{n-1}(1-y)-y(1-y)^{n-1}}{2y-1}-y(1-y)\displaystyle\frac{y^{n-2}(1-y)-y(1-y)^{n-2}}{2y-1}\cr
=\displaystyle\frac{y^n(1-y)-y(1-y)^n}{2y-1}
\end{split}
\]

and

\[
\begin{split}
D_{n-1}(1,y(1-y))-y(1-y)D_{n-2}(1,y(1-y))=D_n(1,y(1-y)).
\end{split}
\]

\end{proof}

\subsection{The Case $n=p^l$, $l\geq 0$ is an integer}

Let $x=y(1-y)$. From \eqref{E2.3}, for $y\neq \frac{1}{2}$ we have
\[
\begin{split}
D_{p^l,k}(1,y(1-y))&=k\,\Big[ \displaystyle\frac{y^{p^l}(1-y)-y(1-y)^{p^l}}{2y-1}\Big]+D_{p^l}(1,y(1-y))\cr
&=k\,\displaystyle\frac{y^{p^l}-y}{2y-1}\,+\,(D_1(1,y(1-y))^{p^l}\cr
&=k\,\displaystyle\frac{y^{p^l}-y}{2y-1}\,+\,1
\end{split}
\]

Then 

\[
\begin{split}
D_{p^l,k}(1,y(1-y))&=k\,\displaystyle\frac{y^{p^l}-y}{2y-1}\,+\,1\cr
&=\frac{k}{2}\,(2y-1)^{p^l-1}+1-\frac{k}{2}\cr
&=\frac{k}{2}\,\Big((2y-1)^2\Big)^{\frac{p^l-1}{2}}+1-\frac{k}{2}\cr
\end{split}
\]

Note that $(2y-1)^2=1-4x$. So

\[
\begin{split}
D_{p^l,k}(1,x)&=\frac{k}{2}\,(1-4x)^{\frac{p^l-1}{2}}+1-\frac{k}{2}\cr
\end{split}
\]

When $y= \frac{1}{2}$, 

$$D_{p^l,k}\Big(1,\frac{1}{4}\Big)\,=\,\frac{k(p^l-1)+2}{2^{p^l}}=\frac{2-k}{2}=1-\frac{k}{2}=\frac{k}{2}\,(1-4x)^{\frac{p^l-1}{2}}+1-\frac{k}{2}.$$ 

Hence for all $x\in \f_q$, we have 

\begin{equation}\label{E2.7}
\begin{split}
D_{p^l,k}(1,x)&=\frac{k}{2}\,(1-4x)^{\frac{p^l-1}{2}}+1-\frac{k}{2}\cr
\end{split}
\end{equation}

\begin{lem}\label{L2.1}(see \cite{Lidl-Niederreiter-97})
The monomial $x^n$ is a PP of $\f_q$ if and only if $(n, q-1)=1.$
\end{lem}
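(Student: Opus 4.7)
The plan is to reduce the problem to a question about multiplication in the cyclic group $\f_q^\ast$. First I would observe that $0^n = 0$, so the map $x \mapsto x^n$ sends $0$ to $0$ and restricts to a map $\f_q^\ast \to \f_q^\ast$. Hence $x^n$ is a PP of $\f_q$ if and only if its restriction is a permutation of $\f_q^\ast$.

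Next I would invoke the standard fact that $\f_q^\ast$ is cyclic of order $q-1$. Fix a generator $g$, so every element of $\f_q^\ast$ has the form $g^k$ with $k$ ranging over representatives of $\mathbb{Z}/(q-1)\mathbb{Z}$. Under the map $x \mapsto x^n$, the element $g^k$ is sent to $g^{kn}$, and the exponent is well defined modulo $q-1$. Thus the restriction $x \mapsto x^n$ on $\f_q^\ast$ is conjugate (via the discrete-logarithm isomorphism) to the map $k \mapsto kn$ on $\mathbb{Z}/(q-1)\mathbb{Z}$.

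Finally, I would apply the elementary fact from the theory of residues: multiplication by $n$ on $\mathbb{Z}/(q-1)\mathbb{Z}$ is a bijection if and only if $n$ is a unit modulo $q-1$, which happens exactly when $\gcd(n, q-1) = 1$. Combining these three steps yields both directions of the claim.

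There is no real obstacle here; this is a textbook argument, which explains the citation in the statement. The only subtlety worth flagging is that the reduction in the first step requires that $x^n = y^n$ with $x, y \in \f_q^\ast$ forces $x$ and $y$ to differ by a root of unity of suitable order, which is precisely what the cyclic structure of $\f_q^\ast$ delivers in the second step.
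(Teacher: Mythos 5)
Your proof is correct and is the standard argument: the paper itself gives no proof of this lemma, citing it directly from Lidl--Niederreiter, and your reduction to multiplication by $n$ on the cyclic group $\f_q^{*}\cong \mathbb{Z}/(q-1)\mathbb{Z}$ is exactly the textbook proof found there. (The only pedantic caveat is the degenerate case $n=0$, where $x^{n}$ is constant; the lemma is implicitly stated for $n\geq 1$.)
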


\begin{thm}\label{T2.1}
Let  $0<l\leq e$. Then $D_{3^l,k}(1,x)$ is a PP of $\f_{3^e}$ if and only if $k\neq 0$ and $( \frac{3^l-1}{2}, 3^e-1)=1.$ Also, $D_{p^l,k}(1,x)$ is not a PP of $\f_{p^e}$ when $p>3$. 
\end{thm}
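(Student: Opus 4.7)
The plan is to reduce everything to equation \eqref{E2.7}, which already provides a closed-form expression for $D_{p^l,k}(1,x)$, and then apply Lemma~\ref{L2.1}. Rewriting \eqref{E2.7} as
\[
D_{p^l,k}(1,x)=\frac{k}{2}\bigl(1-4x\bigr)^{\frac{p^l-1}{2}}+1-\frac{k}{2},
\]
I would observe that the substitution $u=1-4x$ is a bijection of $\f_{p^e}$ (since $p$ is odd), so $D_{p^l,k}(1,x)$ is a PP of $\f_{p^e}$ if and only if the map $u\mapsto \tfrac{k}{2}u^{(p^l-1)/2}+1-\tfrac{k}{2}$ permutes $\f_{p^e}$.

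From there the case analysis is short. If $k=0$ the polynomial is the constant $1$, hence not a PP, which handles one direction. If $k\neq 0$, then since $p$ is odd, $\tfrac{k}{2}\in\f_{p^e}^{*}$, and the affine map $v\mapsto \tfrac{k}{2}v+1-\tfrac{k}{2}$ is a bijection of $\f_{p^e}$. So the problem collapses to whether $u\mapsto u^{(p^l-1)/2}$ is a permutation of $\f_{p^e}$, which by Lemma~\ref{L2.1} is equivalent to $\bigl(\tfrac{p^l-1}{2},\,p^e-1\bigr)=1$. Setting $p=3$ immediately gives the stated criterion for $\f_{3^e}$.

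For the second assertion, assume $p>3$ and $k\neq 0$ (the case $k=0$ was already settled). I would factor
\[
\frac{p^l-1}{2}=\frac{p-1}{2}\cdot(p^{l-1}+p^{l-2}+\cdots+1),
\]
so $\tfrac{p-1}{2}$ divides $\tfrac{p^l-1}{2}$. Since $p-1\mid p^e-1$, also $\tfrac{p-1}{2}\mid p^e-1$. Because $p>3$, we have $\tfrac{p-1}{2}\geq 2$, forcing $\gcd\!\bigl(\tfrac{p^l-1}{2},\,p^e-1\bigr)\geq\tfrac{p-1}{2}>1$, so $x^{(p^l-1)/2}$ is not a PP of $\f_{p^e}$ and therefore $D_{p^l,k}(1,x)$ is not either.

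No step looks particularly delicate; the entire argument is a direct consequence of \eqref{E2.7} together with Lemma~\ref{L2.1}. The only thing that requires a moment of care is the very last divisibility observation that isolates the prime $p=3$ as the only characteristic in which the gcd condition can possibly hold.
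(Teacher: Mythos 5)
Your proposal is correct and follows essentially the same route as the paper: both reduce to the closed form \eqref{E2.7}, strip off the bijections $x\mapsto 1-4x$ and $v\mapsto \tfrac{k}{2}v+1-\tfrac{k}{2}$, and invoke Lemma~\ref{L2.1}, with the $p>3$ case killed by the observation that $\gcd\bigl(\tfrac{p^l-1}{2},p^e-1\bigr)>1$. You merely spell out the divisibility argument via $\tfrac{p-1}{2}$ that the paper leaves as an unproved "fact," which is a welcome addition.
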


\begin{proof}
Clearly, if $k=0$, then $D_{3^l,k}(1,x)$ is not a PP of $\f_{3^e}$. Rest of the proof follows from lemma~\ref{L2.1} and the fact that $1-4x$ is a PP of $\f_{3^e}$. 

The second part of the proof follows from the fact that $( \frac{p^l-1}{2}, p^e-1)\neq 1$ when $p>3$ for $0<l\leq e$.
\end{proof}

\begin{rmk}
Let $k=2$ and $p$ be an odd prime. Then from \eqref{E2.7}, $D_{p^l,2}(1,x)=(1-4x)^{\frac{p^l-1}{2}}$ is a PP of $\f_q$ if and only if and $\Big( \frac{p^l-1}{2}, q-1\Big)=1$; see \cite[Theorem 2.6]{Fernando-2016}.
\end{rmk}

\begin{rmk}
Let $k=3$ and $p>3$. Then  from \eqref{E2.7}, $2\,D_{p^l,k}(1,x)=3 \,(1-4x)^{\frac{p^l-1}{2}}-1$, see \cite[Proposition 2.7]{Hong-Qin-Zhao-FFA-2016-4}. Also,  \cite[Corollary 2.8]{Hong-Qin-Zhao-FFA-2016-4} follows from the second part of the Theorem~\ref{T2.1} when $k=3$. 
\end{rmk}

\subsection{The Case $n=p^l+1$, $l\geq 0$ is an integer}

Let $x=y(1-y)$. From \eqref{E2.3}, for $y\neq \frac{1}{2}$ we have
\begin{equation}\label{E2.8}
\begin{split}
D_{p^l+1,k}(1,x)&= D_{p^l+1,k}(1,y(1-y))\cr
&=k\,\Big[ \displaystyle\frac{y^{p^l+1}(1-y)-y(1-y)^{p^l+1}}{2y-1}\Big]+D_{p^l+1}(1,y(1-y))\cr
&=k\,\{y(1-y)(2y-1)^{p^l-1}\} + D_{p^l+1}(1,y(1-y))\cr
\end{split}
\end{equation}

Let $u=2y-1$. Then it follows from \eqref{E2.8}

\[
\begin{split}
D_{p^l+1,k}(1,x)&= D_{p^l+1,k}(1,y(1-y))\cr
&=k\,\{y(1-y)(2y-1)^{p^l-1}\} + D_{p^l+1}(1,y(1-y))\cr
&=\frac{k}{4}\,(1-u^2)\,u^{p^l-1} + \frac{1}{2}\,(u^{p^l+1}+1)\cr
&=\Big(\frac{1}{2} - \frac{k}{4}\Big)\,u^{p^l+1}+\frac{k}{4}\,u^{p^l-1} +\frac{1}{2}\cr
&=\Big(\frac{1}{2} - \frac{k}{4}\Big)\,(u^2)^{\frac{p^l+1}{2}}+\frac{k}{4}\,(u^2)^{\frac{p^l-1}{2}} +\frac{1}{2}\cr
&=\Big(\frac{1}{2} - \frac{k}{4}\Big)\,(1-4x)^{\frac{p^l+1}{2}}+\frac{k}{4}\,(1-4x)^{\frac{p^l-1}{2}} +\frac{1}{2}
\end{split}
\]

When $y= \frac{1}{2}$,

$$D_{p^l+1,k}\Big(1,\frac{1}{4}\Big)\,=\,\frac{kp^l+2}{2^{p^l+1}}=\frac{1}{2}=\Big(\frac{1}{2} - \frac{k}{4}\Big)\,(1-4x)^{\frac{p^l+1}{2}}+\frac{k}{4}\,(1-4x)^{\frac{p^l-1}{2}} +\frac{1}{2}.$$ 

Hence for all $x\in \f_q$, we have 

\begin{equation}\label{E2.9}
\begin{split}
D_{p^l+1,k}(1,x)&=\Big(\frac{1}{2} - \frac{k}{4}\Big)\,(1-4x)^{\frac{p^l+1}{2}}+\frac{k}{4}\,(1-4x)^{\frac{p^l-1}{2}} +\frac{1}{2}
\end{split}
\end{equation}

\begin{rmk}
Let $k=0$. Then, from \eqref{E2.9} $D_{p^l+1,k}(1,x)=\frac{1}{2}\,(1-4x)^{\frac{p^l+1}{2}}+\frac{1}{2}$ which is a PP of $\f_q$ if and only if $\Big( \frac{p^l+1}{2}, q-1\Big)=1$ (see \cite[Proposition 5.1, Corollary 5.2]{Hou-Mullen-Sellers-Yucas-FFA-2009}). 
\end{rmk}

\begin{thm}
Let $k=2$. Then $D_{p^l+1,k}(1,x)$ is a PP of $\f_q$ if and only if $\Big( \frac{p^l-1}{2}, q-1\Big)=1$. 
\end{thm}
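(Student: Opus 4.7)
The plan is to substitute $k=2$ directly into the closed-form expression \eqref{E2.9} derived just above the statement, and then reduce the PP question to a monomial question that Lemma~\ref{L2.1} can dispatch.

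First I would compute: with $k=2$ the coefficient $\frac{1}{2}-\frac{k}{4}=0$ annihilates the $(1-4x)^{(p^l+1)/2}$ term, and $\frac{k}{4}=\frac{1}{2}$, so \eqref{E2.9} collapses to
\[
D_{p^l+1,2}(1,x)=\frac{1}{2}\,(1-4x)^{\frac{p^l-1}{2}}+\frac{1}{2}.
\]
Since $p$ is odd (as assumed throughout the section), the affine map $x\mapsto 1-4x$ is a bijection of $\f_q$, and the additive translation by $\frac{1}{2}$ together with the nonzero scalar $\frac{1}{2}$ out front preserve the PP property. Therefore $D_{p^l+1,2}(1,x)$ is a PP of $\f_q$ if and only if the monomial $x^{(p^l-1)/2}$ is a PP of $\f_q$.

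Finally I would invoke Lemma~\ref{L2.1}, which yields that $x^{(p^l-1)/2}$ permutes $\f_q$ exactly when $\bigl(\frac{p^l-1}{2},q-1\bigr)=1$. This gives both directions of the equivalence.

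The argument is essentially a one-line specialization of \eqref{E2.9}, so there is no real obstacle; the only thing worth being careful about is justifying that the affine precomposition $x\mapsto 1-4x$ and the affine postcomposition $u\mapsto \frac{1}{2}u+\frac{1}{2}$ are both bijections of $\f_q$ (which they are, since $p$ is odd and $2,4\in\f_q^\ast$), so that the permutation property transfers cleanly between $D_{p^l+1,2}(1,x)$ and the monomial $x^{(p^l-1)/2}$.
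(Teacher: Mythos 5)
Your proposal is correct and follows essentially the same route as the paper: the paper's proof likewise sets $k=2$ in \eqref{E2.9} to obtain $D_{p^l+1,2}(1,x)=\frac{1}{2}(1-4x)^{\frac{p^l-1}{2}}+\frac{1}{2}$ and concludes via the monomial criterion. Your added remarks about the affine pre- and post-compositions being bijections just make explicit what the paper leaves implicit.
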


\begin{proof}
When $k=2$, from \eqref{E2.9} we have $D_{p^l+1,k}(1,x)=\frac{1}{2}\,(1-4x)^{\frac{p^l-1}{2}}+\frac{1}{2}$  which is a PP of $\f_q$ if and only if $\Big( \frac{p^l-1}{2}, q-1\Big)=1$. 
\end{proof}

\begin{thm}
Let $n=p^l+1$ and $k\neq 0, 2$. Then $D_{n,k}(1,x)$ is a PP of $\f_q$ if and only if $l=0$. 
\end{thm}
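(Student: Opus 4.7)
The plan is to use the explicit formula \eqref{E2.9} directly. Since $p$ is odd, the substitution $y = 1-4x$ is a bijection of $\f_q$, so $D_{p^l+1,k}(1,x)$ is a PP of $\f_q$ if and only if
$$g(y) = \Big(\tfrac{1}{2}-\tfrac{k}{4}\Big)\,y^{(p^l+1)/2} + \tfrac{k}{4}\,y^{(p^l-1)/2}$$
is a PP of $\f_q$ (the additive constant $\tfrac12$ is irrelevant for the permutation property).

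For the \emph{if} direction, assume $l=0$. Then $(p^l+1)/2 = 1$ and $(p^l-1)/2 = 0$, so \eqref{E2.9} collapses to $D_{2,k}(1,x) = (k-2)x + 1$. Since $k \neq 2$ in $\f_p$, this is a non-constant linear polynomial and hence a PP of $\f_q$.

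For the \emph{only if} direction, assume $l \geq 1$. Since $p$ is odd and $l \geq 1$, the exponent $m := (p^l-1)/2$ satisfies $m \geq 1$, and I can factor
$$g(y) \;=\; y^{m}\,\Big[\Big(\tfrac{1}{2}-\tfrac{k}{4}\Big)\,y + \tfrac{k}{4}\Big].$$
Clearly $g(0) = 0$. Because $k \neq 2$, the bracketed linear factor has a unique root
$$y_0 \;=\; \frac{k}{k-2} \;\in\; \f_p \;\subseteq\; \f_q,$$
and because $k \neq 0$, we have $y_0 \neq 0$. Hence $g(0) = g(y_0) = 0$ while $0 \neq y_0$, so $g$ is not injective on $\f_q$, and therefore $D_{p^l+1,k}(1,x)$ is not a PP of $\f_q$.

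There is no real obstacle in this argument once one sees that the change of variable $y = 1-4x$ turns \eqref{E2.9} into a two-term polynomial with a common monomial factor; the hypotheses $k \neq 0$ and $k \neq 2$ are then precisely what is needed to produce two distinct zeros of $g$ (they ensure, respectively, that $y_0 \neq 0$ and that $y_0$ exists as a finite element of $\f_p$).
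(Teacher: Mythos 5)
Your proposal is correct and follows essentially the same route as the paper: both reduce via \eqref{E2.9} and the bijection $x \mapsto 1-4x$ to asking when the binomial $(2-k)\,y^{\frac{p^l+1}{2}}+k\,y^{\frac{p^l-1}{2}}$ (up to a nonzero constant) permutes $\f_q$. The only difference is that the paper merely asserts "the fact" that this binomial is a PP if and only if $l=0$, whereas you actually prove it by factoring out $y^{(p^l-1)/2}$ and exhibiting the two distinct roots $0$ and $k/(k-2)$ — a worthwhile addition, since that is exactly where the hypotheses $k\neq 0$ and $k\neq 2$ are used.
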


\begin{proof}
When $k\neq 0, 2$, from \eqref{E2.9} we have $D_{n,k}(1,x)$ is a PP of $\f_q$ if and only if the binomial $(2-k)\,x^{\frac{p^l+1}{2}}+k\,x^{\frac{p^l-1}{2}}$ is a PP of $\f_q$. Hence the proof follows from the fact that $(2-k)\,x^{\frac{p^l+1}{2}}+k\,x^{\frac{p^l-1}{2}}$ is a PP of $\f_q$ if and only if $l=0$. 
\end{proof}

\subsection{The Case $n=p^l+2$, $l\geq 0$ is an integer}

From \eqref{E2.7}, \eqref{E2.9}, and Proposition~\ref{P2.2} we have

\begin{equation}\label{E2.10}
\begin{split}
D_{p^l+2,k}(1,x)&=D_{p^l+1,k}(1,x)-xD_{p^l,k}(a,x)\cr
&= \frac{1}{2}\,(1-4x)^{\frac{p^l+1}{2}}+\frac{k}{2}\,x\,(1-4x)^{\frac{p^l-1}{2}}-\Big(1-\frac{k}{2}\big)x+\frac{1}{2}
\end{split}
\end{equation}

\begin{rmk}
Let $k=0$ and $l=e$. Then from \eqref{E2.10} we have
\[
\begin{split}
D_{p^e+2,0}(1,x)&= \frac{1}{2}\,(1-4x)^{\frac{p^e+1}{2}}-x+\frac{1}{2}
\end{split}
\]
which is a PP of $\f_{p^e}$ if and only if $p^e\equiv 1 \pmod{3}$ (see \cite[Corollary 5.2]{Hou-Mullen-Sellers-Yucas-FFA-2009}).
\end{rmk}

let $u=1-4x$ in \eqref{E2.10}. Then we have

\begin{equation}\label{E2.11}
\begin{split}
D_{p^l+2,k}(1,x)&= \frac{1}{2}\,(1-4x)^{\frac{p^l+1}{2}}+\frac{k}{2}\,x\,(1-4x)^{\frac{p^l-1}{2}}-\Big(1-\frac{k}{2}\big)x+\frac{1}{2}\cr
&=\frac{1}{2}\,u^{\frac{p^l+1}{2}}+\frac{k}{2}\,\Big(\frac{1-u}{4}\Big)\,u^{\frac{p^l-1}{2}}-\Big(\frac{1-u}{4}\Big)\Big(1-\frac{k}{2}\Big)+\frac{1}{2}\cr
&=\Big(\frac{1}{2}-\frac{k}{8}\Big)\,u^{\frac{p^l+1}{2}}+\frac{k}{8}\,u^{\frac{p^l-1}{2}}+\Big(1-\frac{k}{2}\Big)\frac{u}{4}+\frac{k}{8}+\frac{1}{4}
\end{split}
\end{equation}

\begin{thm}
Let $k=2$. Then $D_{p^l+2,k}(1,x)$ is a PP of $\f_q$ if and only if $l=0$. 
\end{thm}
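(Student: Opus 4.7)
The plan is to specialize equation \eqref{E2.11} at $k=2$ and then reduce the permutation question to a short binomial. Setting $k=2$ in \eqref{E2.11}, the coefficients simplify as $\frac{1}{2}-\frac{k}{8}=\frac{1}{4}$, $\frac{k}{8}=\frac{1}{4}$, $1-\frac{k}{2}=0$, and $\frac{k}{8}+\frac{1}{4}=\frac{1}{2}$, giving
\[
D_{p^l+2,2}(1,x)=\tfrac{1}{4}\,u^{\frac{p^l+1}{2}}+\tfrac{1}{4}\,u^{\frac{p^l-1}{2}}+\tfrac{1}{2}=\tfrac{1}{4}\,u^{\frac{p^l-1}{2}}(u+1)+\tfrac{1}{2},
\]
where $u=1-4x$.

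Since $p$ is odd, the affine map $x\mapsto 1-4x$ is a bijection of $\f_q$, and adding or scaling by nonzero constants preserves the permutation property. Therefore $D_{p^l+2,2}(1,x)$ is a PP of $\f_q$ if and only if
\[
h(t):=t^{\frac{p^l-1}{2}}(t+1)
\]
is a PP of $\f_q$. This is the main reduction.

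For $l=0$, the exponent $\frac{p^0-1}{2}=0$, so $h(t)=t+1$, clearly a permutation. For $l\geq 1$, since $p$ is odd we have $\frac{p^l-1}{2}\geq 1$, so $h(0)=0$; but also $h(-1)=(-1)^{\frac{p^l-1}{2}}\cdot 0=0$. Since $0\neq -1$ in $\f_q$, the map $h$ is not injective, hence not a PP.

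The main obstacle, if any, is purely bookkeeping in the first step: verifying that the $u$-linear term vanishes at $k=2$ and that the remaining two terms factor cleanly as $u^{\frac{p^l-1}{2}}(u+1)$. Once that factorization is in hand, the two-element collision $h(0)=h(-1)=0$ finishes the nontrivial direction with no further computation.
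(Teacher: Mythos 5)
Your proof is correct and takes essentially the same route as the paper: specialize \eqref{E2.11} at $k=2$, absorb the bijection $x\mapsto u=1-4x$ and the affine constants, and reduce to whether $t^{\frac{p^l+1}{2}}+t^{\frac{p^l-1}{2}}=t^{\frac{p^l-1}{2}}(t+1)$ is a PP of $\f_q$. The only difference is that you supply the justification the paper leaves as a bare assertion, namely the collision $h(0)=h(-1)=0$ when $l\geq 1$, which is exactly the right argument.
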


\begin{proof}
When $k=2$, It follows from \eqref{E2.11} that $D_{p^l+2,k}(1,x)$ is a PP of $\f_q$ if and only if the binomial $x^{\frac{p^l+1}{2}}+x^{\frac{p^l-1}{2}}$ is a PP of $\f_q$. Note that $x^{\frac{p^l+1}{2}}+x^{\frac{p^l-1}{2}}$ is a PP of $\f_q$ if and only if $l=0$. 
\end{proof}

\begin{thm}
Let $p>3$ and $k=4$. Then it follows from \eqref{E2.11} that $D_{p^l+2,k}(1,x)$ is a PP of $\f_q$ if and only if the binomial $x^{\frac{p^l-1}{2}}-\frac{1}{2}x$ is a PP of $\f_q$. 
\end{thm}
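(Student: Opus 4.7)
The plan is to set $k=4$ in formula \eqref{E2.11} and observe that the coefficient of the highest-degree term $u^{(p^l+1)/2}$ collapses to zero, leaving a binomial of much smaller effective complexity plus a linear/constant tail that can be stripped by affine changes of variable.

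Concretely, I would first compute the four coefficients at $k=4$: the coefficient $\frac{1}{2}-\frac{k}{8}$ becomes $0$, the coefficient $\frac{k}{8}$ of $u^{(p^l-1)/2}$ becomes $\frac{1}{2}$, the coefficient $(1-\frac{k}{2})/4$ of $u$ becomes $-\frac{1}{4}$, and the constant $\frac{k}{8}+\frac{1}{4}$ becomes $\frac{3}{4}$. Plugging in gives
\[
D_{p^l+2,4}(1,x)\,=\,\frac{1}{2}\,u^{\frac{p^l-1}{2}}\,-\,\frac{u}{4}\,+\,\frac{3}{4},
\]
where $u=1-4x$.

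Next I would invoke the elementary facts that (i) the map $x\mapsto 1-4x$ is an $\f_q$-affine bijection (here we use $p$ odd, hence $4\ne 0$), (ii) adding a constant does not affect the permutation property, and (iii) multiplying a polynomial by a nonzero scalar does not affect the permutation property. Applying (i)--(iii) in turn, $D_{p^l+2,4}(1,x)$ is a PP of $\f_q$ if and only if $\frac{1}{2}u^{(p^l-1)/2}-\frac{u}{4}$ is a PP of $\f_q$ in the variable $u$, if and only if $u^{(p^l-1)/2}-\frac{u}{2}$ is a PP of $\f_q$. Relabeling $u$ as $x$ gives the desired equivalence with $x^{(p^l-1)/2}-\frac{1}{2}x$.

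There is no real obstacle here: the entire argument is a calculation verifying that $k=4$ kills the leading term, plus the standard observation that affine substitutions and nonzero scaling preserve permutations. The only item worth flagging is the hypothesis $p>3$, which is used to guarantee that $4$ is a legitimate value of $k$ in the range $0\le k\le p-1$ (so that \eqref{E2.11} actually applies to this choice of $k$) and to ensure $4\ne 0$ in $\f_q$ for step (i).
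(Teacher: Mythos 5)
Your proposal is correct and follows essentially the same route as the paper: substitute $k=4$ into \eqref{E2.11}, observe that the coefficient of $u^{\frac{p^l+1}{2}}$ vanishes, and reduce via the affine substitution $u=1-4x$ together with the facts that additive constants and nonzero scalar multiples do not affect the permutation property. The paper's own proof is just a terser version of this same computation (it additionally remarks that the resulting binomial is a PP if and only if $l=0$, but that is not part of the stated equivalence).
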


\begin{proof}
When $p>3$ and $k=4$, It follows from \eqref{E2.11} that $D_{p^l+2,k}(1,x)$ is a PP of $\f_q$ if and only if the binomial $x^{\frac{p^l-1}{2}}-\frac{1}{2}x$  is a PP of $\f_q$. Note that $x^{\frac{p^l-1}{2}}-\frac{1}{2}x$  is a PP of $\f_q$ if and only if $l=0$. 
\end{proof}

\begin{thm}
Let $n=p^l+2$ and $k\neq 0, 2, 4$. Then $D_{n,k}(1,x)$ is a PP of $\f_q$ if and only if the trinomial $(4-k)\,x^{\frac{p^l+1}{2}}+k\,x^{\frac{p^l-1}{2}}+(2-k)x$ is a PP of $\f_q$. 
\end{thm}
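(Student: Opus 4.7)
The plan is to read off the claim directly from equation \eqref{E2.11}, which already expresses $D_{p^l+2,k}(1,x)$ as a polynomial in the auxiliary variable $u = 1-4x$. Since $p$ is odd, the map $x \mapsto 1-4x$ is an $\f_q$-linear bijection, so $D_{p^l+2,k}(1,x)$ permutes $\f_q$ if and only if the right-hand side of \eqref{E2.11}, regarded as a polynomial in $u$, permutes $\f_q$.

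Next I would use the elementary observation that adding an $\f_q$-constant to a polynomial and multiplying it by a nonzero element of $\f_q$ both preserve the property of being a PP. Applying this to \eqref{E2.11}, I would strip off the constant term $\frac{k}{8}+\frac{1}{4}$ and then multiply through by $8$ to clear denominators, obtaining
\[
(4-k)\,u^{\frac{p^l+1}{2}} + k\,u^{\frac{p^l-1}{2}} + (2-k)u,
\]
and finally rename $u$ as $x$. This matches the trinomial in the statement exactly, completing the equivalence.

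There is essentially no obstacle here; the only sanity check is that the three manipulations (the affine change of variable $u=1-4x$, discarding the additive constant, and the scaling by $8$) are all PP-preserving over $\f_q$ for odd $p$, which they are. The hypothesis $k \neq 0, 2, 4$ does not enter the argument itself but is imposed solely to avoid overlap with the cases already treated in the preceding theorems and remarks of this subsection, where one of the trinomial's coefficients vanishes and a sharper criterion is available.
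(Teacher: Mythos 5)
Your argument is correct and is exactly the intended one: the paper states this theorem without proof, but it follows from \eqref{E2.11} by the same three PP-preserving manipulations (the bijection $x\mapsto 1-4x$, dropping the constant $\frac{k}{8}+\frac14$, and scaling by $8\neq 0$) that the paper uses explicitly in the proofs of the preceding $k=2$ and $k=4$ cases. Your remark about the role of the hypothesis $k\neq 0,2,4$ is also accurate.
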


\subsection{The generating function}The generating function of $D_{n,k}(1,x)$ is given by 
$$\displaystyle\sum_{n=0}^{\infty}\,D_{n,k}(1,x)\,z^n=\displaystyle\frac{2-k+(k-1)z}{1-z+xz^2}.$$

\begin{proof}
\[
\begin{split}
&(1-z+xz^2)\displaystyle\sum_{n=0}^{\infty}\,D_{n,k}(1,x)\,z^n \cr
&=\displaystyle\sum_{n=0}^{\infty}\,D_{n,k}(1,x)\,z^n-\,\displaystyle\sum_{n=0}^{\infty}\,D_{n,k}(1,x)\,z^{n+1}+\,x\,\displaystyle\sum_{n=0}^{\infty}\,D_{n,k}(1,x)\,z^{n+2}\cr
&=D_{0,k}(1,x)+D_{1,k}(1,x)z-D_{0,k}(1,x)z \cr &+\displaystyle\sum_{n=0}^{\infty}\,(D_{n+2,k}(1,x)-D_{n+1,k}(1,x)+xD_{n,k}(1,x))\,z^{n+2}
\end{split}
\]
Since $D_{0,k}(1,x)=2-k$, $D_{1,k}(1,x)=1$, and $D_{n+2,k}(1,x)=D_{n+1,k}(1,x)-xD_{n,k}(1,x)$ for $n\geq 0$, we have the desired result. 
\end{proof}

\begin{rmk}
When $k=1,\, \textnormal{and}\,\, 2$, we get \cite[Proposition 2.2]{Hong-Qin-Zhao-FFA-2016-2} and \cite[Theorem 2.10]{Fernando-2016}, respectively. When $k=3$ with $p>3$, we get \cite[Proposition 2.6]{Hong-Qin-Zhao-FFA-2016-4}. The generating function when $k=0$ appears in \cite[Section 4]{Hou-Ly-FFA-2010}. 
\end{rmk}

\begin{lem}\label{L2.17}(See \cite{Hou-Mullen-Sellers-Yucas-FFA-2009})
Let $q=p^e$ and Let $x\in \f_{q^2}$. Then 
$$ x(1-x)\in \f_q \,\,\textnormal{if and only if} \,\,x^q=x \,\,\textnormal{or}\,\, x^q=1-x.$$
Also, if we define
$$V=\{x\in \f_{q^2}\,;\,x^q=1-x\},$$
then $\f_q \cap V=\frac{1}{2}$. 
\end{lem}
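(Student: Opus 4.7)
The plan is to prove the two assertions separately, using only the standard fact that elements of $\f_q$ inside $\f_{q^2}$ are exactly those fixed by the $q$-th power Frobenius. For the forward direction of the equivalence, I would observe that $x(1-x)\in \f_q$ is the same as $(x(1-x))^q=x(1-x)$. Since Frobenius is a ring homomorphism, $(1-x)^q=1-x^q$, so this expands to $x^q(1-x^q)=x(1-x)$. Writing $y=x^q$, this is $y-y^2=x-x^2$, which after rearranging factors as $(y-x)(1-y-x)=0$. Hence either $y=x$, giving $x^q=x$, or $y=1-x$, giving $x^q=1-x$.

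For the converse, if $x^q=x$, then $x\in \f_q$ and so $x(1-x)\in \f_q$ trivially. If instead $x^q=1-x$, then a direct calculation yields $(x(1-x))^q = x^q(1-x)^q = (1-x)\bigl(1-(1-x)\bigr) = (1-x)x$, which is equal to $x(1-x)$, confirming membership in $\f_q$. For the second assertion about $\f_q\cap V$, if $x\in \f_q\cap V$ then $x^q=x$ and $x^q=1-x$ hold simultaneously, forcing $x=1-x$ and hence $x=\tfrac{1}{2}$; conversely $\tfrac{1}{2}\in \f_q$ since $p$ is odd, and $(\tfrac{1}{2})^q=\tfrac{1}{2}=1-\tfrac{1}{2}$, so $\tfrac{1}{2}\in V$.

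There is no real obstacle here: the only clever step is the factorization $(y-x)(1-y-x)=0$, which is what collapses the Frobenius-invariance condition into the clean binary alternative stated in the lemma. Everything else is a direct verification using the homomorphism property of Frobenius and the hypothesis that $p$ is odd.
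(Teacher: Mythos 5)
Your proof is correct. The paper itself gives no argument for this lemma --- it is quoted with a citation to Hou--Mullen--Sellers--Yucas --- and your derivation (applying Frobenius, using $(1-x)^q=1-x^q$, and factoring $y-y^2=x-x^2$ as $(y-x)(1-y-x)=0$, plus the observation that $x=1-x$ forces $x=\tfrac{1}{2}$ since $p$ is odd) is exactly the standard proof of this fact.
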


\begin{thm}
Let $p$ be an odd prime. Then $D_{n,k}(1,x)$ is a PP of $\f_q$ if and only if the function $y \mapsto k\,\displaystyle\frac{y^n(1-y)-y(1-y)^n}{2y-1}+y^n+(1-y)^n$ is a 2-to-1 mapping on $(\f_q \cup V)\setminus \frac{1}{2}$ and $k\,\displaystyle\frac{y^n(1-y)-y(1-y)^n}{2y-1}+y^n+(1-y)^n\neq \frac{k(n-1)+2}{2^n}$ for any $y\in (\f_q \cup V)\setminus \frac{1}{2}$. 
\end{thm}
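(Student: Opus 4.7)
The plan is to recast the condition that $D_{n,k}(1,x)$ permutes $\f_q$ via the substitution $x=y(1-y)$, exploiting Lemma~\ref{L2.17} to identify exactly those $y$ for which $y(1-y)\in\f_q$. I would first introduce the map $\pi\colon(\f_q\cup V)\setminus\{\tfrac12\}\to\f_q$ given by $\pi(y)=y(1-y)$. By Lemma~\ref{L2.17} its image lies in $\f_q$. Given $x\in\f_q\setminus\{\tfrac14\}$, the two roots of $t^2-t+x$ in $\f_{q^2}$ form a pair $\{y,1-y\}$ with $y\ne 1-y$; they lie in $\f_q$ when $1-4x$ is a square in $\f_q$, and in $V\setminus\f_q$ otherwise (in the latter case Frobenius sends $\sqrt{1-4x}$ to $-\sqrt{1-4x}$, yielding $y^q=1-y$). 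Hence $\pi$ surjects onto $\f_q\setminus\{\tfrac14\}$ with every fibre of size exactly two, so $\pi$ is a 2-to-1 map.

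Next I would tie $\pi$ to the function $f(y):=k\,\dfrac{y^n(1-y)-y(1-y)^n}{2y-1}+y^n+(1-y)^n$ appearing in the theorem. By \eqref{E2.3}, $f(y)=D_{n,k}(1,\pi(y))$ for every $y\in(\f_q\cup V)\setminus\{\tfrac12\}$; in particular $f$ takes its values in $\f_q$. A direct calculation shows $f(1-y)=f(y)$: both the numerator $y^n(1-y)-y(1-y)^n$ and the denominator $2y-1$ change sign under $y\mapsto 1-y$, so their quotient is preserved, while $y^n+(1-y)^n$ is manifestly symmetric. Therefore $f$ factors as $f=\bar f\circ\pi$ for a well-defined map $\bar f\colon\f_q\setminus\{\tfrac14\}\to\f_q$ satisfying $\bar f(x)=D_{n,k}(1,x)$.

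Finally I would put these pieces together. The polynomial $D_{n,k}(1,x)$ permutes $\f_q$ iff it is injective on $\f_q$, and the splitting $\f_q=(\f_q\setminus\{\tfrac14\})\cup\{\tfrac14\}$ breaks this into two independent requirements: (a) $\bar f$ is injective on $\f_q\setminus\{\tfrac14\}$; (b) the value $D_{n,k}(1,\tfrac14)=\dfrac{k(n-1)+2}{2^n}$ from \eqref{E2.6} is not attained by $\bar f$. Because $\pi$ is already 2-to-1, (a) is equivalent to every value of $f$ being taken by exactly two elements of $(\f_q\cup V)\setminus\{\tfrac12\}$, i.e., $f$ is a 2-to-1 mapping on that set; and (b) is literally the second clause in the theorem statement. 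The only step needing genuine care is the 2-to-1-ness of $\pi$, which the Frobenius argument above settles; once that is in place the remainder is bookkeeping.
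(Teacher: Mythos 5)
Your proposal is correct and follows essentially the same route as the paper: both rest on the parametrization $x=y(1-y)$ via Lemma~\ref{L2.17}, the identity \eqref{E2.3}, and the fact that the fibre of $y\mapsto y(1-y)$ over each $x\in\f_q\setminus\{\tfrac14\}$ is exactly $\{y,1-y\}$ of size two. Your version merely packages the same argument more systematically by factoring the map as $\bar f\circ\pi$ and verifying the symmetry $f(1-y)=f(y)$ explicitly, which the paper leaves implicit.
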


\begin{proof}
For necessity, assume that $D_{n,k}(1,x)$ is a PP of $\f_q$ and $y_1, y_2\in (\f_q \cup V)\setminus \frac{1}{2}$ such that 
$$k\,\displaystyle\frac{y_1^n(1-y_1)-y(1-y_1)^n}{2y_1-1}+y_1^n+(1-y_1)^n=k\,\displaystyle\frac{y_2^n(1-y_2)-y(1-y_2)^n}{2y_2-1}+y_2^n+(1-y_2)^n.$$ Then $y_1(1-y_1), y_2(1-y_2) \in \f_q$ and $D_{n,k}(1,y_1(1-y_1))=D_{n,k}(1,y_2(1-y_2))$. Since $D_{n,k}(1,x)$ is a PP of $\f_q$, we have $y_1(1-y_1)=y_2(1-y_2)$ which implies that $y_1=y_2$ or $1-y_2$. So $y \mapsto \displaystyle\frac{y^n-(1-y)^n}{2y-1}$ is a 2-to-1 mapping on $(\f_q \cup V)\setminus \frac{1}{2}$. 

If $y\in (\f_q \cup V)\setminus \frac{1}{2}$, then $y(1-y)\in \f_q$ and $y(1-y)\neq \frac{1}{2}(1-\frac{1}{2})$. Thus 
\[
\begin{split}
k\,\displaystyle\frac{y^n(1-y)-y(1-y)^n}{2y-1}+y^n+(1-y)^n &=D_{n,k}(1,y(1-y))\cr
&\neq D_{n,k}(1,\frac{1}{2}(1-\frac{1}{2}))=\frac{k(n-1)+2}{2^n}. 
\end{split}
\]

For sufficiency, assume $x_1, x_2 \in \f_q$ such that $D_{n,k}(1,x_1)=D_{n,k}(1,x_2)$. Write $x_1=y_1(1-y_1)$ and $x_2=y_2(1-y_2)$, where $y_1, y_2 \in (\f_q \cup V)$. Then 
\[
\begin{split}
k\,\displaystyle\frac{y_1^n(1-y_1)-y(1-y_1)^n}{2y_1-1}+y_1^n+(1-y_1)^n =D_{n,k}(1,x_1)=D_{n,k}(1,x_2)\cr
=k\,\displaystyle\frac{y_2^n(1-y_2)-y(1-y_2)^n}{2y_2-1}+y_2^n+(1-y_2)^n.
\end{split}
\]

If $y_1=\frac{1}{2}$, then 
$$D_{n,k}(1,x_2)=D_{n,k}(1,x_1)=D_{n,k}(1,\frac{1}{4})=\frac{k(n-1)+2}{2^n},$$
which implies that $y_2=\frac{1}{2}$. Hence $x_1=x_2$. 

If $y_1, y_2 \neq \frac{1}{2}$, since  $y \mapsto k\,\displaystyle\frac{y^n(1-y)-y(1-y)^n}{2y-1}+y^n+(1-y)^n$ is a 2-to-1 mapping on $(\f_q \cup V)\setminus \frac{1}{2}$, we have $y_1=y_2$ or $y_1=1-y_2$. Hence $x_1=x_2$. 

\end{proof}

\begin{rmk}
When $k=0,1,2,$ and $3\,(p>3)$, we get \cite[Proposition 4.2]{Hou-Mullen-Sellers-Yucas-FFA-2009}, \cite[Theorem 2.3]{Hong-Qin-Zhao-FFA-2016-2},  \cite[Theorem 2.10]{Fernando-2016}, and \cite[Theorem 2.10]{Hong-Qin-Zhao-FFA-2016-4}, respectively. 
\end{rmk}


\section{An explicit expression for $D_{n,k}(1,x)$}

Let $p$ be odd. The $n$-th reversed Dickson polynomial of the $(k+1)$-th kind $D_{n,k}(1,x)$ can be written explicitly. For $n\geq 1$, define
$$f_{n,k}(x)=k\,\,\displaystyle\sum_{j\geq 0} \,\,\binom{n-1}{2j+1}\,\,(x^j-x^{j+1})+2\,\,\displaystyle\sum_{j\geq 0}\,\,\binom{n}{2j}\,\,x^j \,\,\in \mathbb{Z}[x], $$

and

$$f_{0,k}(x)=2-k.$$

We show that for $n\geq 0$

\begin{equation}\label{N1}
D_{n,k}(1,x)=\Big(\frac{1}{2}\Big)^{n}\,f_{n,k}(1-4x).
\end{equation}

Note that \eqref{N1} holds for $n=0$ for any $k$. We thus hereafter always assume, unless specified, that $n>0$.  

\begin{prop}\label{P3.2}
Let $p$ be an odd prime and $n>0$ be an integer. Then in $\f_q[x]$, 
$$D_{n,k}(1,x)=\Big(\frac{1}{2}\Big)^{n}\,f_{n,k}(1-4x).$$
In particular, $D_{n,k}(1,x)$ is a PP of $\f_q$ if and only if $f_{n,k}(x)$ is a PP of $\f_q$. 
\end{prop}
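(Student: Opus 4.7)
The plan is to work directly from the functional equation \eqref{E2.3} and the binomial expansions of $y^n \pm (1-y)^n$ after the substitution $u = 2y - 1$. First I would split the right-hand side of \eqref{E2.3} into its two pieces: the factor $\frac{y^n(1-y) - y(1-y)^n}{2y-1}$ appearing with $k$, and the reversed Dickson polynomial $D_n(1, y(1-y)) = y^n + (1-y)^n$. Setting $u = 2y-1$ gives $y = (1+u)/2$, $1-y = (1-u)/2$, and $y(1-y) = (1-u^2)/4$, whence $1 - 4x = u^2$.

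Next I would factor $y^n(1-y) - y(1-y)^n = y(1-y)\bigl[y^{n-1} - (1-y)^{n-1}\bigr]$ and use
\[
(1+u)^{n-1} - (1-u)^{n-1} = 2 \sum_{j \geq 0} \binom{n-1}{2j+1} u^{2j+1}
\]
to cancel the factor $u = 2y-1$, obtaining
\[
\frac{y^n(1-y) - y(1-y)^n}{2y-1} = \frac{1-u^2}{2^n} \sum_{j\geq 0} \binom{n-1}{2j+1} u^{2j} = \frac{1}{2^n} \sum_{j \geq 0} \binom{n-1}{2j+1} \bigl((1-4x)^j - (1-4x)^{j+1}\bigr).
\]
A parallel expansion of $(1+u)^n + (1-u)^n$ gives
\[
y^n + (1-y)^n = \frac{1}{2^{n-1}} \sum_{j \geq 0}\binom{n}{2j} (1-4x)^j.
\]
Substituting both into \eqref{E2.3} and pulling out a common factor of $1/2^n$ produces exactly $(1/2)^n f_{n,k}(1-4x)$ on the right-hand side.

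Thus far this is an identity of polynomials in $y$; to descend it to an identity in $\f_q[x]$, I would invoke the fact that the substitution homomorphism $\f_q[x] \to \f_q[y]$ sending $x \mapsto y(1-y)$ is injective, since $y(1-y)$ is nonconstant in $y$. Finally, for the PP statement: because $p$ is odd the map $x \mapsto 1-4x$ is an affine bijection of $\f_q$, and $(1/2)^n$ is a nonzero scalar, so $D_{n,k}(1,x)$ permutes $\f_q$ if and only if $f_{n,k}(x)$ does.

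The main obstacle is simply careful bookkeeping with the binomial sums and the powers of $2$; once the substitution $u = 2y-1$ is in place, no further conceptual difficulty arises.
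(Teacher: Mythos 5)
Your argument is correct and follows the paper's proof essentially verbatim: the same decomposition via \eqref{E2.3}, the same substitution $u=2y-1$, and the same binomial expansions of $(1\pm u)^{n-1}$ and $(1\pm u)^{n}$ leading to $\big(\tfrac12\big)^n f_{n,k}(u^2)$ with $u^2=1-4x$. The only difference is cosmetic: the paper checks the exceptional value $x=\tfrac14$ (i.e.\ $y=\tfrac12$) separately using $D_{n,k}(1,\tfrac14)=\frac{k(n-1)+2}{2^n}$, whereas you absorb it by reading the computation as a polynomial identity in $y$ and invoking injectivity of the substitution $x\mapsto y(1-y)$ on $\f_q[x]$ --- both are fine.
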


\begin{proof}
Let $x\in \f_q$. There exists $y\in \f_{q^2}$ such that $x=y(1-y)$. 
If $x\neq \frac{1}{4}$, we have 
\begin{equation}\label{E3.3}
\begin{split}
D_{n,k}(1,y(1-y))=k\,\Big[ \displaystyle\frac{y^n(1-y)-y(1-y)^n}{2y-1}\Big]+D_n(1,y(1-y)).
\end{split}
\end{equation}

Let $u=2y-1$. Then we have 
$$D_n(1,y(1-y))=\Big(\frac{1}{2}\Big)^{n-1}\,\sum_{j\geq 0}\,\binom{n}{2j}\,u^{2j};$$

see \cite[Proposition 2.1]{Hou-Ly-FFA-2010}. 

We also have 

\[
\begin{split}
\Big[ \displaystyle\frac{y^n(1-y)-y(1-y)^n}{2y-1}\Big]&=\frac{1}{u}\Big\{\Big(\frac{1+u}{2}\Big)^n\Big(\frac{1-u}{2}\Big)-\Big(\frac{1+u}{2}\Big)\Big(\frac{1-u}{2}\Big)^n\Big\}\cr
&=\Big(\frac{1}{2}\Big)^{n+1}\,\, \frac{(1-u^2)}{u}\,\, \Big\{(1+u)^{n-1}-(1-u)^{n-1}\Big\}\cr
&=\Big(\frac{1}{2}\Big)^{n}\,\,(1-u^2)\,\,\displaystyle\sum_{j\geq 0}\,\binom{n-1}{2j+1}\,\,u^{2j}\cr
&=\Big(\frac{1}{2}\Big)^{n}\,\,\displaystyle\sum_{j\geq 0}\,\binom{n-1}{2j+1}\,\,u^{2j}-\Big(\frac{1}{2}\Big)^{n}\,\,\displaystyle\sum_{j\geq 0}\,\binom{n-1}{2j+1}\,\,u^{2j+2}.
\end{split}
\]

From \eqref{E3.3}, we have 

\begin{equation}\label{E3.4}
\begin{split}
D_{n,k}(1,y(1-y))&=\frac{k}{2^n}\,\,\Big\{\displaystyle\sum_{j\geq 0}\,\binom{n-1}{2j+1}\,\,u^{2j}-\displaystyle\sum_{j\geq 0}\,\binom{n-1}{2j+1}u^{2j+2}\Big\}\cr
&+ \Big(\frac{1}{2}\Big)^{n-1}\,\sum_{j\geq 0}\,\binom{n}{2j}\,u^{2j}.
\end{split}
\end{equation}

Note that $u^2=1-4y(y-1)=1-4x$.

\[
\begin{split}
D_{n,k}(1,x)&=\Big(\frac{1}{2}\Big)^{n}\,\,f_{n,k}(u^2)\cr
&=\Big(\frac{1}{2}\Big)^{n}\,\,f_{n,k}(1-4y(y-1))\cr
&=\Big(\frac{1}{2}\Big)^{n}\,\,f_{n,k}(1-4x).
\end{split}
\]

If $x=\frac{1}{4}$, since $f_{n,k}(0)=k(n-1)+2$, we have
$$D_{n,k}(1,x)=\frac{k(n-1)+2}{2^n}=\Big(\frac{1}{2}\Big)^{n}\,f_{n,k}(0)=\Big(\frac{1}{2}\Big)^{n}\,f_{n,k}(1-4x).$$

Clearly, $D_{n,k}(1,x)$ is a PP of $\f_q$ if and only if $f_{n,k}(x)$ is  PP of $\f_q$. 
\end{proof}

\begin{rmk} Let $k=0$ and $n\geq 1$. 

From Proposition~\ref{P3.2}, we have

\[
\begin{split}
D_{n,0}(1,y(1-y))&=\Big(\frac{1}{2}\Big)^{n}\,f_{n,0}(1-4x)\cr
&=\Big(\frac{1}{2}\Big)^{n}\,f_{n,0}(u^2)\cr
&=\Big(\frac{1}{2}\Big)^{n-1}\,\displaystyle\sum_{j\geq 0}\,\binom{n}{2j}\,\,u^{2j}.
\end{split}
\]

Hence

\begin{equation}\label{N2}
\begin{split}
D_{n,0}(1,x)&=\Big(\frac{1}{2}\Big)^{n-1}\,f_{n}(1-4x);
\end{split}
\end{equation}

see \cite[Proposition~2.1]{Hou-Ly-FFA-2010}, where 

$$f_{n}(x)=\displaystyle\sum_{j\geq 0} \,\,\binom{n}{2j}\,\,x^j;$$

see \cite[Eq.~2.1]{Hou-Ly-FFA-2010}.

Note that \eqref{N2} also holds for $n=0$. 

\end{rmk}

\begin{rmk} Let $k=1$ and $n\geq 1$. 

From Proposition~\ref{P3.2}, we have

\begin{equation}\label{N3}
\begin{split}
D_{n,1}(1,y(1-y))&=\Big(\frac{1}{2}\Big)^{n}\,f_{n,1}(1-4x)\cr
&=\Big(\frac{1}{2}\Big)^{n}\,f_{n,1}(u^2)\cr
&=\Big(\frac{1}{2}\Big)^{n}\,\displaystyle\sum_{j\geq 0} \,\,\binom{n-1}{2j+1}\,\,(u^{2j}-u^{2j+2})+\frac{1}{2^{n-1}}\,\,\displaystyle\sum_{j\geq 0}\,\,\binom{n}{2j}\,\,u^{2j}.
\end{split}
\end{equation}

From Pascal's identity, we have

\begin{equation}\label{E3.5}
\begin{split}
\sum_{j\geq 0}\,\binom{n+1}{2j+1}\,u^{2j}&=\sum_{j\geq 0}\,\binom{n}{2j+1}\,u^{2j}+\sum_{j\geq 0}\,\binom{n}{2j}\,u^{2j}\cr
&=\sum_{j\geq 0}\,\binom{n-1}{2j+1}\,u^{2j}+\sum_{j\geq 0}\,\binom{n-1}{2j}\,u^{2j}+\sum_{j\geq 0}\,\binom{n}{2j}\,u^{2j}\cr
&=\sum_{j\geq 0}\,\binom{n-1}{2j+1}\,u^{2j}-\sum_{j\geq 0}\,\binom{n-1}{2j+1}\,u^{2j+2}+\sum_{j\geq 0}\,\binom{n-1}{2j+1}\,u^{2j+2}\cr
&+\sum_{j\geq 0}\,\binom{n-1}{2j}\,u^{2j}+\sum_{j\geq 0}\,\binom{n}{2j}\,u^{2j}\cr
&=\sum_{j\geq 0}\,\binom{n-1}{2j+1}\,u^{2j}-\sum_{j\geq 0}\,\binom{n-1}{2j+1}\,u^{2j+2}+2\sum_{j\geq 0}\,\binom{n}{2j}\,u^{2j}.\cr
\end{split}
\end{equation}

Multiplying \eqref{E3.5} by $\dfrac{1}{2^n}$ gives the following. 

\begin{equation}\label{E3.6}
\begin{split}
\dfrac{1}{2^n}\,\sum_{j\geq 0}\,\binom{n+1}{2j+1}\,u^{2j}&=\dfrac{1}{2^n}\,\sum_{j\geq 0}\,\binom{n-1}{2j+1}\,u^{2j}-\dfrac{1}{2^n}\,\sum_{j\geq 0}\,\binom{n-1}{2j+1}\,u^{2j+2}\cr
&+\dfrac{1}{2^{n-1}}\,\sum_{j\geq 0}\,\binom{n}{2j}\,u^{2j}.
\end{split}
\end{equation}

\eqref{N3} and \eqref{E3.6} imply

\[
\begin{split}
D_{n,1}(1,y(1-y))&=\dfrac{1}{2^n}\,\sum_{j\geq 0}\,\binom{n+1}{2j+1}\,u^{2j}. 
\end{split}
\]

Hence

\begin{equation}\label{N5}
\begin{split}
D_{n,1}(1,y(1-y))&=\dfrac{1}{2^n}\,f_{n+1}(1-4x),
\end{split}
\end{equation}

see \cite[Theorem~3.2]{Hong-Qin-Zhao-FFA-2016-2}, where 

$$f_{n}(x)=\displaystyle\sum_{j\geq 0} \,\,\binom{n}{2j+1}\,\,x^j;$$

see \cite{Hong-Qin-Zhao-FFA-2016-2}.

Note that \eqref{N5} also holds for $n=0$. 

\end{rmk}

\begin{rmk} Let $k=2$ and $n\geq 1$.

From Proposition~\ref{P3.2}, we have

\begin{equation}\label{N6}
\begin{split}
D_{n,2}(1,y(1-y))&=\Big(\frac{1}{2}\Big)^{n}\,f_{n,2}(1-4x)\cr
&=\Big(\frac{1}{2}\Big)^{n}\,f_{n,2}(u^2)\cr
&=\frac{1}{2^{n-1}}\,\displaystyle\sum_{j\geq 0} \,\,\binom{n-1}{2j+1}\,\,(u^{2j}-u^{2j+2})+\frac{1}{2^{n-1}}\,\,\displaystyle\sum_{j\geq 0}\,\,\binom{n}{2j}\,\,u^{2j}.
\end{split}
\end{equation}

From Pascal's identity, we have

\begin{equation}\label{E3.7}
\begin{split}
\sum_{j\geq 0}\,\binom{n}{2j+1}\,u^{2j}&=\sum_{j\geq 0}\,\binom{n-1}{2j+1}\,u^{2j}+\sum_{j\geq 0}\,\binom{n-1}{2j}\,u^{2j}\cr
&=\sum_{j\geq 0}\,\binom{n-1}{2j+1}\,u^{2j}-\sum_{j\geq 0}\,\binom{n-1}{2j+1}\,u^{2j+2}+\sum_{j\geq 0}\,\binom{n-1}{2j+1}\,u^{2j+2}\cr
&+\sum_{j\geq 0}\,\binom{n-1}{2j}\,u^{2j}\cr
&=\sum_{j\geq 0}\,\binom{n-1}{2j+1}\,u^{2j}-\sum_{j\geq 0}\,\binom{n-1}{2j+1}\,u^{2j+2}+\sum_{j\geq 0}\,\binom{n}{2j}\,u^{2j}.\cr
\end{split}
\end{equation}

Multiplying \eqref{E3.7} by $\dfrac{1}{2^{n-1}}$ gives the following. 

\begin{equation}\label{E3.8}
\begin{split}
\dfrac{1}{2^{n-1}}\,\sum_{j\geq 0}\,\binom{n}{2j+1}\,u^{2j}&=\dfrac{1}{2^{n-1}}\,\sum_{j\geq 0}\,\binom{n-1}{2j+1}\,u^{2j}-\dfrac{1}{2^{n-1}}\,\sum_{j\geq 0}\,\binom{n-1}{2j+1}\,u^{2j+2}\cr
&+\dfrac{1}{2^{n-1}}\,\sum_{j\geq 0}\,\binom{n}{2j}\,u^{2j}.
\end{split}
\end{equation}

\eqref{N6} and \eqref{E3.8} imply

\[
\begin{split}
D_{n,2}(1,y(1-y))&=\dfrac{1}{2^{n-1}}\,\sum_{j\geq 0}\,\binom{n}{2j+1}\,u^{2j}. 
\end{split}
\]

Hence

\begin{equation}\label{N8}
\begin{split}
D_{n,2}(1,y(1-y))&=\dfrac{1}{2^{n-1}}\,f_{n}(1-4x),
\end{split}
\end{equation}

see \cite[Proposition~3.2]{Fernando-2016}, where 

$$f_{n}(x)=\displaystyle\sum_{j\geq 0} \,\,\binom{n}{2j+1}\,\,x^j;$$

see\cite{Fernando-2016}.

Note that \eqref{N8} also holds for $n=0$. 

\end{rmk}

\begin{rmk} Let $k=3$, $p>3$, and $n=2l$, where $l\in \mathbb{Z}^+$. 

From Proposition~\ref{P3.2}, we have

\begin{equation}\label{N9}
\begin{split}
D_{n,3}(1,y(1-y))&=\Big(\frac{1}{2}\Big)^{n}\,f_{n,3}(1-4x)\cr
&=\Big(\frac{1}{2}\Big)^{n}\,f_{n,3}(u^2)\cr
&=\frac{3}{2^n}\,\displaystyle\sum_{j\geq 0} \,\,\binom{n-1}{2j+1}\,\,(u^{2j}-u^{2j+2})+\frac{1}{2^{n-1}}\,\,\displaystyle\sum_{j\geq 0}\,\,\binom{n}{2j}\,\,u^{2j}.
\end{split}
\end{equation}

From Pascal's identity, we have

\begin{equation}\label{E3.9}
\begin{split}
&3\,\sum_{j\geq 0}\,\binom{2l+1}{2j+1}\,u^{2j}-4\,\sum_{j\geq 0}\,\binom{2l}{2j}\,u^{2j}=3\,\sum_{j\geq 0}\,\binom{2l}{2j+1}\,u^{2j}-\,\sum_{j\geq 0}\,\binom{2l}{2j}\,u^{2j}\cr
&=3\,\sum_{j\geq 0}\,\binom{2l-1}{2j+1}\,u^{2j}+\,\,3\,\sum_{j\geq 0}\,\binom{2l-1}{2j}\,u^{2j}-\,\sum_{j\geq 0}\,\binom{2l}{2j}\,u^{2j}\cr
&=3\,\sum_{j\geq 0}\,\binom{2l-1}{2j+1}\,u^{2j}-3\,\sum_{j\geq 0}\,\binom{2l-1}{2j+1}\,u^{2j+2}+3\,\sum_{j\geq 0}\,\binom{2l-1}{2j+1}\,u^{2j+2}\cr 
&+\,\,3\,\sum_{j\geq 0}\,\binom{2l-1}{2j}\,u^{2j}-\,\sum_{j\geq 0}\,\binom{2l}{2j}\,u^{2j}\cr
&=3\,\sum_{j\geq 0}\,\binom{2l-1}{2j+1}\,u^{2j}-3\,\sum_{j\geq 0}\,\binom{2l-1}{2j+1}\,u^{2j+2}+2\,\sum_{j\geq 0}\,\binom{2l}{2j}\,u^{2j}
\end{split}
\end{equation}

Also note that 

\begin{equation}\label{E3.10}
\begin{split}
3\,\sum_{j\geq 0}\,\binom{2l+1}{2j+1}\,u^{2j}-4\,\sum_{j\geq 0}\,\binom{2l}{2j}\,u^{2j}&=3\,\sum_{j\geq 0}\,\binom{2l+1}{2j+1}\,u^{2j}-4\,\sum_{j\geq 0}\,\,\dfrac{(2j+1)}{(2l+1)}\,\binom{2l+1}{2j+1}\,u^{2j}\cr
&=\sum_{j\geq 0}\,\,\dfrac{(6l-8j-1)}{(2l+1)}\,\binom{2l+1}{2j+1}\,u^{2j}\cr
\end{split}
\end{equation}

From \eqref{E3.9} and \eqref{E3.10} we have

\begin{equation}\label{E3.11}
\begin{split}
\sum_{j\geq 0}\,\,\dfrac{(6l-8j-1)}{(2l+1)}\,\binom{2l+1}{2j+1}\,u^{2j}&=3\,\sum_{j\geq 0}\,\binom{2l-1}{2j+1}\,u^{2j}-3\,\sum_{j\geq 0}\,\binom{2l-1}{2j+1}\,u^{2j+2}\cr
&+2\,\sum_{j\geq 0}\,\binom{2l}{2j}\,u^{2j}
\end{split}
\end{equation}

Multiplying \eqref{E3.11} by $\dfrac{1}{2^{2l}}$ gives the following. 

\begin{equation}\label{E3.12}
\begin{split}
\dfrac{1}{2^{2l}}\,\,\sum_{j\geq 0}\,\,\dfrac{(6l-8j-1)}{(2l+1)}\,\binom{2l+1}{2j+1}\,u^{2j}&=\dfrac{3}{2^{2l}}\,\sum_{j\geq 0}\,\binom{2l-1}{2j+1}\,u^{2j}-\dfrac{3}{2^{2l}}\,\sum_{j\geq 0}\,\binom{2l-1}{2j+1}\,u^{2j+2}\cr
&+\dfrac{1}{2^{2l-1}}\,\sum_{j\geq 0}\,\binom{2l}{2j}\,u^{2j}
\end{split}
\end{equation}

\eqref{N9} and \eqref{E3.12} imply

\[
\begin{split}
D_{n,3}(1,y(1-y))&=\dfrac{1}{2^{2l}}\,\,\sum_{j\geq 0}\,\,\dfrac{(6l-8j-1)}{(2l+1)}\,\binom{2l+1}{2j+1}\,u^{2j}. 
\end{split}
\]

Hence

\begin{equation}\label{N10}
\begin{split}
D_{n,3}(1,y(1-y))&=\dfrac{1}{2^{n}}\,f_{n}(1-4x),
\end{split}
\end{equation}

see \cite[Theorem~3.2]{Hong-Qin-Zhao-FFA-2016-4}, where 

$$f_{n}(x)=-x^{\frac{n}{2}}+\,\,\displaystyle\sum_{j=0}^{\frac{n}{2}-1}\,\,\dfrac{(3n-8j-1)}{(n+1)}\,\binom{n+1}{2j+1}\,x^{j};$$

see \cite{Hong-Qin-Zhao-FFA-2016-4}.

Note that \eqref{N10} also holds for $n=0$. 

\end{rmk}


\section{Computation of $\sum_{a\in \f_q}D_{n,k}(1,a)$. }

We compute the sum $\sum_{a\in \f_q}D_{n,k}(1,a)$ in this section. From the generating function of $D_{n,k}(1,x)$, we have
\begin{equation}\label{E4.1}
\begin{split}
\displaystyle\sum_{n=0}^{\infty}\,D_{n,k}(1,x)\,z^n&=\displaystyle\frac{2-k+(k-1)z}{1-z+xz^2}\cr
&=\displaystyle\frac{2-k+(k-1)z}{1-z}\,\,\displaystyle\frac{1}{1-(\frac{z^2}{z-1})\,x}\cr
&=\displaystyle\frac{2-k+(k-1)z}{1-z}\,\,\displaystyle\sum_{m\geq 0} \Big(\frac{z^2}{z-1}\Big)^m\,\,x^m\cr
&=\displaystyle\frac{2-k+(k-1)z}{1-z}\,\,\Big[1+\displaystyle\sum_{m=1}^{q-1}\displaystyle\sum_{l\geq 0} \Big(\frac{z^2}{z-1}\Big)^{m+l(q-1)}\,\,x^{m+l(q-1)} \Big]\cr
&\equiv \displaystyle\frac{2-k+(k-1)z}{1-z}\,\,\Big[1+\displaystyle\sum_{m=1}^{q-1}\displaystyle\sum_{l\geq 0} \Big(\frac{z^2}{z-1}\Big)^{m+l(q-1)}\,\,x^m \Big]\,\,\,\pmod{x^q-x}\cr
&=\displaystyle\frac{2-k+(k-1)z}{1-z}\,\,\Big[1+\displaystyle\sum_{m=1}^{q-1}\displaystyle\frac{(\frac{z^2}{z-1})^{m}}{1-(\frac{z^2}{z-1})^{q-1}}\,\,x^m \Big]\cr
&=\displaystyle\frac{2-k+(k-1)z}{1-z}\,\,\Big[1+\displaystyle\sum_{m=1}^{q-1}\displaystyle\frac{(z-1)^{q-1-m}\,\,z^{2m}}{(z-1)^{q-1} - z^{2(q-1)}}\,\,x^m \Big]\cr
\end{split}
\end{equation}

Since $D_{n_1,k}(1,x)=D_{n_2,k}(1,x)$ for any $x\in \f_{q}\setminus \{\frac{1}{4}\}$ when $n_1, n_2 >0$ and $n_1\equiv n_2 \pmod{q^2-1}$, we have the following. 

\begin{equation}\label{E4.2}
\begin{split}
\displaystyle\sum_{n\geq 0} \,D_{n,k}(1,x)\,z^n&= 2-k+\displaystyle\sum_{n\geq 1} \,D_{n,k}(1,x)\,z^n\cr
&=2-k+\displaystyle\sum_{n=1}^{q^2-1}\,\, \displaystyle\sum_{l\geq 0} \,D_{n+l(q^2-1),k}(1,x)\,z^{n+l(q^2-1)}\cr
&\equiv 2-k+\displaystyle\sum_{n=1}^{q^2-1}\,\,D_{n,k}(1,x)\,\, \displaystyle\sum_{l\geq 0}\,z^{n+l(q^2-1)}\,\,\pmod{x^q-x}\cr
&=2-k+\displaystyle\frac{1}{1-z^{q^2-1}} \displaystyle\sum_{n=1}^{q^2-1}\,D_{n,k}(1,x)\,z^n
\end{split}
\end{equation}

Combining \eqref{E4.1} and \eqref{E4.2} gives

\[
\begin{split}
2-k+\displaystyle\frac{1}{1-z^{q^2-1}} \displaystyle\sum_{n=1}^{q^2-1}\,D_{n,k}(1,x)\,z^n \equiv \displaystyle\frac{2-k+(k-1)z}{1-z}\,\,\Big[1+\displaystyle\sum_{m=1}^{q-1}\displaystyle\frac{(z-1)^{q-1-m}\,\,z^{2m}}{(z-1)^{q-1} - z^{2(q-1)}}\,\,x^m \Big] \cr
\pmod{x^q-x},
\end{split}
\]

i.e.

\[
\begin{split}
\displaystyle\sum_{n=1}^{q^2-1}\,D_{n,k}(1,x)\,z^n \equiv \displaystyle\frac{z\,(z^{q^2-1}-1)}{z-1} +\,\,h(z)\,\,\displaystyle\sum_{m=1}^{q-1}(z-1)^{q-1-m}\,\,z^{2m}\,\,x^m \cr
\pmod{x^q-x},
\end{split}
\]

where

$$h(z)=\displaystyle\frac{[2-k+(k-1)z]\,(z^{q^2-1}-1)}{(z-1)\,[(z-1)^{q-1}-z^{2(q-1)}]}.$$

Note that 

\[
\begin{split}
h(z) &= \displaystyle\frac{[2-k+(k-1)z]\,(z^{q^2-1}-1)}{(z-1)\,[(z-1)^{q-1}-z^{2(q-1)}]}\cr
&= \displaystyle\frac{[2-k+(k-1)z]\,(z^{q^2-1}-1)}{(z-1)^{q}-z^{2(q-1)}(z-1)}\cr
&= \displaystyle\frac{[2-k+(k-1)z]\,(z^{q^2-1}-1)}{(1-z^{q-1})\,(z^q-z^{q-1}-1)}\cr
&=  \displaystyle\frac{[2-k+(k-1)z]\,(z^{q^2}-z)}{(z-z^{q})\,(z^q-z^{q-1}-1)}\cr
&= \displaystyle\frac{[2-k+(k-1)z]\,(-1-(z-z^q)^{q-1})}{z^q-z^{q-1}-1}\cr
\end{split}
\]

Let $\displaystyle\sum_{j=0}^{q^2-q+1}\,b_j\,z^j = [2-k+(k-1)z]\,\,(-1-(z-z^q)^{q-1})$. 

Write $j=\alpha + \beta q$ where $0\leq \alpha, \beta \leq q-1$. Then we have the following.

$$
b_j = \left\{
        \begin{array}{ll}
           (-1)^{\beta +1} \,(2-k)\,\binom{q-1}{\beta} &  \textnormal{if}\,\,\alpha +\beta =q-1,\\[0.3cm]
           (-1)^{\beta +1} \,(k-1)\,\binom{q-1}{\beta} &  \textnormal{if}\,\,\alpha +\beta =q,\\[0.3cm]
            1-k &  \textnormal{if}\,\,\alpha +\beta =1, \\[0.3cm]
            k-2 &  \textnormal{if}\,\,\alpha +\beta =0, \\[0.3cm]
            0 &  \textnormal{otherwise}.
        \end{array}
    \right.
$$

Recall that if $n_1\equiv n_2 \pmod{q^2-1}$, then $D_{n_1,k}(1,x)=D_{n_2,k}(1,x)$ for any $x\in \f_{q}\setminus \{\frac{1}{4}\}$.

\begin{equation}\label{E44.3}
\begin{split}
&\displaystyle\sum_{n=1}^{q^2-1}\, \Big(\displaystyle\sum_{a\in \f_q}\,D_{n,k}(1, a)\Big) z^n \cr 
&=\displaystyle\sum_{n=1}^{q^2-1}\,D_{n,k}(1, \frac{1}{4})\, z^n +\displaystyle\sum_{n=1}^{q^2-1}\, \Big(\displaystyle\sum_{a\in \f_q\setminus \{\frac{1}{4}\}}\,D_{n,k}(1, a)\Big) z^n\cr 
&=\displaystyle\sum_{n=1}^{q^2-1}\,D_{n,k}(1, \frac{1}{4})\, z^n +\displaystyle\sum_{a\in \f_q\setminus \{\frac{1}{4}\}}\,\displaystyle\sum_{n=1}^{q^2-1}\,D_{n,k}(1, a) z^n\cr 
&=\displaystyle\sum_{n=1}^{q^2-1}\,\frac{k(n-1)+2}{2^n}\, z^n +\displaystyle\sum_{a\in \f_q\setminus \{\frac{1}{4}\}}\,\displaystyle\frac{z\,(z^{q^2-1}-1)}{z-1}\,+\,h(z)\,\,\displaystyle\sum_{m=1}^{q-1}(z-1)^{q-1-m}\,\,z^{2m}\,\,\displaystyle\sum_{a\in \f_q\setminus \{\frac{1}{4}\}}\,a^m\cr 
&=\displaystyle\sum_{n=1}^{q^2-1}\,\frac{k(n-1)+2}{2^n}\, z^n\,-\,\displaystyle\frac{z\,(z^{q^2-1}-1)}{z-1}\,+\,h(z)\,\,\displaystyle\sum_{m=1}^{q-1}(z-1)^{q-1-m}\,\,z^{2m}\,\,\displaystyle\sum_{a\in \f_q\setminus \{\frac{1}{4}\}}\,a^m\cr
&=\displaystyle\sum_{n=1}^{q^2-1}\,\frac{k(n-1)+2}{2^n}\, z^n\,-\,\displaystyle\frac{z\,(z^{q^2-1}-1)}{z-1}\,+\,h(z)\,\,\displaystyle\sum_{m=1}^{q-1}(z-1)^{q-1-m}\,\,z^{2m}\,\,\displaystyle\sum_{a\in \f_q}\,a^m\cr
&-\,h(z)\,\,\displaystyle\sum_{m=1}^{q-1}(z-1)^{q-1-m}\,\,z^{2m}\,\,\displaystyle\sum_{a\in \f_q}\,\Big(\dfrac{1}{4}\Big)^m.
\end{split}
\end{equation}

From \eqref{E44.3} and \cite[Lemma 7.3]{Lidl-Niederreiter-97}, we have 

\begin{equation}\label{E4.3}
\begin{split}
&\displaystyle\sum_{n=1}^{q^2-1}\, \Big(\displaystyle\sum_{a\in \f_q}\,D_{n,k}(1, a)\Big) z^n \cr 
&=\displaystyle\sum_{n=1}^{q^2-1}\,\frac{k(n-1)+2}{2^n}\, z^n - \displaystyle\frac{z(1-z^{q^2-1})}{1-z} - h(z)\,z^{2(q-1)}-h(z)\displaystyle\sum_{m=1}^{q-1}\,(z-1)^{q-1-m}\,z^{2m}\,\Big(\frac{1}{4}\Big)^m,
\end{split}
\end{equation}

where 

$$h(z)= \displaystyle\frac{\displaystyle\sum_{j=0}^{q^2-q+1}\,b_jz^j}{z^q-z^{q-1}-1}.$$

For all integers $1\leq n\leq q^2-1$, define
$$\mathcal{D}_{n,k}:=\displaystyle\sum_{a\in \f_q} D_{n,k}(1,a).$$

Then from \eqref{E4.3}, we have 

\begin{equation}\label{E4.4}
\begin{split}
&(z^q-z^{q-1}-1)\displaystyle\sum_{n=1}^{q^2-1}\, \Big[\mathcal{D}_{n,k} - \Big(\frac{k(n-1)+2}{2^n}\Big)\Big] z^n \cr &=(1+z^{q-1}-z^q)\,\displaystyle\sum_{i=1}^{q^2-1}\,z^i\, - \Big(\,z^{2(q-1)}+ \,\displaystyle\sum_{m=1}^{q-1}\,(z-1)^{q-1-m}\,z^{2m}\,\Big(\frac{1}{4}\Big)^m \Big) \,\Big( \displaystyle\sum_{j=0}^{q^2-q+1}\,b_jz^j \Big).
\end{split}
\end{equation}

Let $d_{n,k}=\mathcal{D}_{n,k} - \Big(\dfrac{k(n-1)+2}{2^n}\Big)$ and the right hand side of \eqref{E4.4} be $\displaystyle\sum_{i=1}^{q^2+q-1}\,c_iz^i$.

Then we have

\begin{equation}\label{E4.5}
(z^q-z^{q-1}-1)\,\displaystyle\sum_{n=1}^{q^2-1}\, d_{n,k} z^n = \displaystyle\sum_{i=1}^{q^2+q-1}\,c_i\,z^i.
\end{equation}

\begin{lem}\label{L4.1} (See \cite{Hong-Qin-Zhao-FFA-2016-2}) Let 

\begin{equation}\label{E44.5}
(z^q-z^{q-1}-1)\,\displaystyle\sum_{n=1}^{q^2-1}\, d_{n} z^n = \displaystyle\sum_{i=1}^{q^2+q-1}\,c_i\,z^i.
\end{equation}

By comparing the coefficient of $z^i$ on both sides of \eqref{E44.5}, we have the following.\\

\noindent $d_j=-c_j$ if $1\leq j\leq q-1$;\\
$d_q=c_1-c_q$;\\
$d_{lq+j}=d_{(l-1)q+j} - d_{(l-1)q+j+1} - c_{lq+j}$ if $1\leq l\leq q-2$ and $1\leq j\leq q-1$;\\
$d_{lq}=d_{(l-1)q} - d_{(l-1)q+1} - c_{lq}$ if $2\leq l\leq q-2$;\\
$d_{q^2-q+j}=\displaystyle\sum_{i=j}^{q-1}\,c_{q^2+i}$ if $0\leq j\leq q-1$.
\end{lem}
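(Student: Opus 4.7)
The plan is to expand the left-hand side of \eqref{E44.5} as a formal polynomial in $z$, read off the coefficient of each monomial $z^i$, and equate it with $c_i$. Writing
\[
(z^q - z^{q-1} - 1) \sum_{n=1}^{q^2-1} d_n z^n = \sum_{n=1}^{q^2-1} d_n z^{n+q} - \sum_{n=1}^{q^2-1} d_n z^{n+q-1} - \sum_{n=1}^{q^2-1} d_n z^n,
\]
and adopting the convention $d_n := 0$ for $n \notin \{1, 2, \ldots, q^2-1\}$, the coefficient of $z^i$ on the left-hand side equals $d_{i-q} - d_{i-q+1} - d_i$. Matching this with $c_i$ for each $1 \le i \le q^2+q-1$ gives the master identity
\[
d_{i-q} - d_{i-q+1} - d_i = c_i.
\]

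The five assertions of the lemma then follow by inspecting which of the three $d$-terms are active in each range of $i$. For $1 \le i \le q-1$ both $i-q$ and $i-q+1$ are non-positive, so $d_{i-q} = d_{i-q+1} = 0$ and the master identity collapses to $d_i = -c_i$. At $i = q$ only $d_{i-q+1} = d_1$ and $d_i = d_q$ remain, and substituting $d_1 = -c_1$ from the previous case yields $d_q = c_1 - c_q$. For $q+1 \le i \le q^2-1$ all three $d$-terms are present, and rearranging gives $d_i = d_{i-q} - d_{i-q+1} - c_i$; writing $i = lq + j$ with $0 \le j \le q-1$, this is assertion (3) when $1 \le j \le q-1$ and assertion (4) when $j = 0$, with the ranges of $l$ as stated in the lemma (the cases $l = q-1$ produce consistency conditions that are automatically satisfied by the final formula).

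For the last assertion, at $q^2 \le i \le q^2+q-1$ the term $d_i$ vanishes, and at $i = q^2+q-1$ also $d_{i-q+1} = d_{q^2} = 0$. The master identity therefore collapses to the telescoping recursion $d_{i-q} = d_{i-q+1} + c_i$ with boundary value $d_{q^2-1} = c_{q^2+q-1}$, and iterating downwards while writing $n = q^2 - q + j$ with $0 \le j \le q-1$ yields $d_{q^2-q+j} = \sum_{i=j}^{q-1} c_{q^2+i}$ by a straightforward induction on $q-1-j$.

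The main obstacle is purely bookkeeping: one must carefully identify, at each of the boundary values $i \in \{q, q^2, q^2+q-1\}$, exactly which of the three indices $i-q$, $i-q+1$, $i$ lies in the valid range $\{1, \ldots, q^2-1\}$ so that the correct subset of $d$-terms contributes. Once that accounting is done, all five statements of the lemma fall out of the master identity by immediate substitution.
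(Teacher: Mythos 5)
Your proposal is correct and follows exactly the route the paper (and the cited reference) intends: expand $(z^q-z^{q-1}-1)\sum_n d_n z^n$, read off the coefficient of $z^i$ as $d_{i-q}-d_{i-q+1}-d_i$ with the convention that $d_n=0$ outside $1\le n\le q^2-1$, and specialize to the various index ranges. The paper gives no further argument beyond "comparing the coefficient of $z^i$ on both sides," so your write-up simply makes that bookkeeping explicit and does so accurately.
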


The following theorem is an immediate consequence of \eqref{E4.5},  Lemma~\ref{L4.1}, and the fact that $d_{n,k}:=\displaystyle\sum_{a\in \f_q} D_{n,k}(1,a)- \Big(\dfrac{k(n-1)+2}{2^n}\Big)$.

\begin{thm}\label{T4.2}
Let $c_j$ be defined as in \eqref{E4.5} for $1\leq j\leq q^2+q-1$. Then we have the following. \\

$\displaystyle\sum_{a\in \f_q} D_{j,k}(1,a)= -c_j + \frac{k(j-1)+2}{2^{j}}\,\, \textnormal{if}\,\, 1\leq j\leq q-1;$\\

$\displaystyle\sum_{a\in \f_q} D_{q,k}(1,a)= c_1-c_q+\frac{2-k}{2^q};$

\[
\begin{split}
\displaystyle\sum_{a\in \f_q} D_{lq+j,k}&=\displaystyle\sum_{a\in \f_q} D_{(l-1)q+j,k} - \displaystyle\sum_{a\in \f_q} D_{(l-1)q+j+1,k}-c_{lq+j} \cr &+\displaystyle\frac{(kj+2)(1-2^q+2^{q-1})+k(2^q-1)}{2^{lq+j}}\, \textnormal{if}\, 1\leq l\leq q-2 \,\textnormal{and} \,1\leq j\leq q-1;
\end{split}
\]

$\displaystyle\sum_{a\in \f_q} D_{lq,k}=\displaystyle\sum_{a\in \f_q} D_{(l-1)q,k} - \displaystyle\sum_{a\in \f_q} D_{(l-1)q+1,k} - c_{lq}+\displaystyle\frac{(k-2)(2^q-1)+2^q}{2^{lq}}$ if $2\leq l\leq q-2$;\\

$\displaystyle\sum_{a\in \f_q} D_{q^2-q+j,k}=\displaystyle\sum_{i=j}^{q-1}\,c_{q^2+i}+\frac{k(j-1)+2}{2^{q^2-q+j}}$ if $0\leq j\leq q-1$.

\end{thm}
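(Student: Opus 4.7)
My approach is to apply Lemma~\ref{L4.1} verbatim to equation \eqref{E4.5} after identifying its $d_n$ with $d_{n,k}$, and then to translate the resulting five identities back into statements about $\mathcal{D}_{n,k}$ through the defining relation $d_{n,k} = \mathcal{D}_{n,k} - \frac{k(n-1)+2}{2^n}$. Lemma~\ref{L4.1} supplies: $d_{j,k} = -c_j$ for $1\le j\le q-1$; $d_{q,k} = c_1 - c_q$; the two-step recurrence $d_{lq+j,k} = d_{(l-1)q+j,k} - d_{(l-1)q+j+1,k} - c_{lq+j}$ for $1\le l\le q-2$ and $1\le j\le q-1$, together with its counterpart at the multiples of $q$; and the closed form $d_{q^2-q+j,k} = \sum_{i=j}^{q-1} c_{q^2+i}$ for $0\le j\le q-1$. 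In each case I will solve for $\mathcal{D}_{n,k}$ and collect the leftover rational correction.

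The remaining work is arithmetic simplification carried out in characteristic $p$. The crucial observation is that since $q = p^e$, any integer multiple of $q$ vanishes modulo $p$, so $\frac{k(aq+b)+c}{2^{aq+b}}$ collapses to $\frac{kb+c}{2^{aq+b}}$ inside $\f_q$. For the recurrence case, after rescaling the three shifted corrections to the common denominator $2^{lq+j}$ by the appropriate powers of $2^q$, they combine as
\[
\frac{[k(j-1)+2](1-2^q) + 2^{q-1}[kj+2]}{2^{lq+j}}=\frac{(kj+2)(1-2^q+2^{q-1})+k(2^q-1)}{2^{lq+j}},
\]
matching exactly the correction term claimed in the statement. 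A parallel calculation with $j=0$ produces the $\frac{(k-2)(2^q-1)+2^q}{2^{lq}}$ term for the $n=lq$ case. The three non-recursive cases (the ranges $1\le j\le q-1$, $j=q$, and $n=q^2-q+j$) follow by direct substitution, using $k(q-1)+2\equiv 2-k$ and $k(q^2-q+j-1)+2\equiv k(j-1)+2$ modulo $p$.

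The main ``obstacle'' is not conceptual but bookkeeping: in each of the five cases one must keep track of precisely which powers of $2$ and $2^q$ attach to each shifted version of $\frac{k(n-1)+2}{2^n}$ and verify that the resulting rational combination agrees with the one advertised. Once the characteristic-$p$ reductions are carried out carefully, the theorem is an immediate consequence of Lemma~\ref{L4.1} and the definition of $d_{n,k}$, with no further input required.
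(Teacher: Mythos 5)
Your proposal is correct and follows exactly the paper's route: the paper likewise derives the theorem as an immediate consequence of \eqref{E4.5}, Lemma~\ref{L4.1}, and the definition $d_{n,k}=\mathcal{D}_{n,k}-\frac{k(n-1)+2}{2^n}$, leaving the arithmetic unstated. Your explicit reduction of the correction terms modulo $p$ (e.g.\ checking that $[k(j-1)+2](1-2^q)+(kj+2)2^{q-1}$ equals the advertised numerator) is accurate and simply fills in the bookkeeping the paper omits.
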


\begin{rmk}
Let $k=1$ in Theorem~\ref{T4.2}. Then we have \cite[Theorem~4.1]{Hong-Qin-Zhao-FFA-2016-2}.
\end{rmk}

\begin{rmk}
Let $k=2$ in Theorem~\ref{T4.2}. Then we have \cite[Theorem~4.2]{Fernando-2016}.
\end{rmk}


\end{document}